\setlist[enumerate]{label={\upshape(\alph*)}}
\newcommand{\RT}{\mathrm{RT}}
\newcommand{\CRT}{\mathrm{CRT}}
\newcommand{\CRTS}{\mathrm{CRT}_{\mathrm{S}}}
\newcommand{\CRTI}{\mathrm{CRT}_{\mathrm{I}}}
\newcommand{\CRTW}{\mathrm{CRT}_{\mathrm{W}}}
\renewcommand{\circ}[1]{(#1)}
\newcommand{\f}{f_9}
\newcommand{\g}{f_{11}}
\newcommand{\F}{F}
\renewcommand{\d}{d_{19}}
\newcommand{\dt}{d_{23}}
\newcommand{\D}{D}
\renewcommand{\tt}[1]{\ensuremath{\mathtt{#1}}}
\newcommand{\p}{\mathbf{p}}
\newcommand{\pp}{\mathbf{P}}
\newcommand{\q}{\mathbf{q}}
\newcommand{\qq}{\mathbf{Q}}
\newcommand{\light}[1]{\underline{#1}}
\newtheorem{theorem}{Theorem}[section]
\newtheorem{lemma}[theorem]{Lemma}
\newtheorem{result}[theorem]{Result}
\theoremstyle{definition}
\newtheorem{definition}[theorem]{Definition}
\begin{document}

\title{Circular repetition thresholds on some small alphabets: Last cases of Gorbunova's conjecture}
\author{James~D.~Currie\footnote{Supported by NSERC, grant number 2017-03901}, Lucas~Mol, and Narad~Rampersad\footnote{Supported by NSERC, grant number 418646-2012}\\
\small University of Winnipeg\\
\small \{j.currie, l.mol, n.rampersad\}@uwinnipeg.ca}
\date{May 29, 2018}

\maketitle

\begin{abstract}
A word is called $\beta$-free if it has no factors of exponent greater than or equal to $\beta$.  The \emph{repetition threshold} $\RT(k)$ is the infimum of the set of all $\beta$ such that there are arbitrarily long $k$-ary $\beta$-free words (or equivalently, there are $k$-ary $\beta$-free words of every sufficiently large length, or even every length).  These three equivalent definitions of the repetition threshold give rise to three natural definitions of a repetition threshold for \emph{circular words}.  The infimum of the set of all $\beta$ such that 
\begin{enumerate}

\item there are arbitrarily long $k$-ary $\beta$-free circular words is called the \emph{weak circular repetition threshold}, denoted $\CRTW(k)$;

\item there are $k$-ary $\beta$-free circular words of every sufficiently large length is called the \emph{intermediate circular repetition threshold}, denoted $\CRTI(k)$;

\item there are $k$-ary $\beta$-free circular words of every length is called the \emph{strong circular repetition threshold}, denoted $\CRTS(k)$.

\end{enumerate}
We prove that $\CRTS(4)=\tfrac{3}{2}$ and $\CRTS(5)=\tfrac{4}{3}$, confirming a conjecture of Gorbunova and providing the last unknown values of the strong circular repetition threshold.  We also prove that $\CRTI(3)=\CRTW(3)=\RT(3)=\tfrac{7}{4}$.

\noindent
{\bf MSC 2010:} 68R15

\noindent
{\bf Keywords:} Circular words; Repetition threshold; Circular repetition threshold
\end{abstract}

\section{Introduction}

A (linear) word is simply a sequence of letters over some finite alphabet.  A word $w=w_1\dots w_n$, where the $w_i$ are letters, is \emph{periodic} if for some positive integer $p$, $w_{i+p}=w_i$ for all $1\leq i\leq n-p$.  In this case, $p$ is called a \emph{period} of $w$.  Note that every word of length $n$ trivially has period $n$.  The \emph{exponent} of a word $w$, denoted $\exp(w),$ is the ratio between its length and its minimal period.  If $r=\exp(w)$ is strictly greater than $1$, then $w$ is called an \emph{$r$-power}.  For example, the English word \tt{alfalfa} has minimal period $3$ and exponent $\tfrac{7}{3},$ so it is a $\tfrac{7}{3}$-power.

A word is called $\beta$-free if it has no factors of exponent greater than or equal to $\beta$, i.e.\ if it has no $r$-powers for $r\geq \beta$.  It is called $\beta^+$-free if it has no factors of exponent strictly greater than $\beta$.  The \emph{repetition threshold} function is given by
\[
\RT(k)=\inf\{\beta\colon \mbox{there are arbitrarily long $\beta$-free words on $k$ letters}\}.
\]
Equivalently, $\RT(k)$ is the smallest $\beta$ such that there is an infinite $\beta^+$-free word on $k$ letters.  It is well-known that the Thue-Morse sequence avoids overlaps~\cite{Berstel1995}, from which it follows that $\RT(2)=2.$  The repetition threshold function was introduced by Dejean~\cite{Dejean1972}, who conjectured that 
\[
\RT(k)=\begin{cases}
7/4 \mbox{ if } k=3;\\
7/5 \mbox{ if } k=4; \mbox{ and}\\
k/(k-1) \mbox{ if } k\geq 5.
\end{cases}
\]
Certain cases of the conjecture were proven by various authors~\cite{Dejean1972, Carpi2007, CurrieRampersad2009, CurrieRampersad2009again, Noori2007, Moulin1992, Pansiot1984}, and the last remaining cases were finally proven independently by Currie and Rampersad~\cite{CurrieRampersad2011} and Rao~\cite{Rao2011}.

Interest in Dejean's conjecture has given rise to a number of similar threshold problems in combinatorics on words.  These include the \emph{generalized repetition threshold} introduced by Ilie, Ochem, and Shallit~\cite{IlieOchemShallit2005}, and the \emph{Abelian repetition threshold} introduced by Samsonov and Shur~\cite{SamsonovShur2012}.  In this article, we are concerned with a repetition threshold for \emph{circular words}.  Two words $x$ and $y$ are said to be \emph{conjugates} if there are words $u$ and $v$ such that $x=uv$ and $y=vu$.  The conjugates of a word $w$ can be obtained by rotating the letters of $w$ cyclically. For a word $w$, the \emph{circular word} $(w)$ is the set of all conjugates of $w$.  Intuitively, one can think of a circular word as being obtained from a linear word by linking the ends, giving a cyclic sequence of letters.

A word is a \emph{factor} of a circular word $\circ{w}$ if it is a factor of some conjugate of $w$.  As for linear words, a circular word is $\beta$-free if it has no factors of exponent greater than or equal to $\beta$, and $\beta^+$-free if it has no factors of exponent strictly greater than $\beta$.  

Note that if the linear word $w$ is $\beta$-free, then so are all of its factors. This means that there are three equivalent definitions of the repetition threshold $\RT(k)$.  It is the infimum of the set of all $\beta$ such that
\begin{enumerate}
\item there are arbitrarily long $k$-ary $\beta$-free words;
\item there are $k$-ary $\beta$-free words of every sufficiently large length; or
\item there are $k$-ary $\beta$-free words of every length.
\end{enumerate} 
On the other hand, if $x$ is a factor of some $\beta$-free circular word $(w)$, it is not necessarily true that the circular word $(x)$ is $\beta$-free (even though the linear word $x$ must be $\beta$-free).  Thus, there are three natural definitions of a repetition threshold for circular words:
\begin{enumerate}
\item the \emph{weak circular repetition threshold}, 
\[
\CRTW(k)=\inf\{\beta\colon\ \mbox{there are arbitrarily long $k$-ary $\beta$-free circular words}\};
\]

\item the \emph{intermediate circular repetition threshold},
\[
\CRTI(k)=\inf\{\beta\colon\ \mbox{there are $k$-ary $\beta$-free circular words of every sufficiently large length}\};
\] 

\item and the \emph{strong circular repetition threshold},
\[
\CRTS(k)=\inf\{\beta\colon\ \mbox{there are $k$-ary $\beta$-free circular words of every length}\}.
\] 
\end{enumerate}
Clearly we have
\begin{align}
\RT(k)\leq \CRTW(k)\leq \CRTI(k)\leq \CRTS(k) \label{CRTorder}
\end{align}
for all $k\geq 2.$

Almost all values of the strong circular repetition threshold $\CRTS(k)$ are known. Aberkane and Currie~\cite{AberkaneCurrie2004} demonstrated that $\CRTS(2)=5/2$, while the fact that $\CRTS(3)=2$ follows from the work of Currie~\cite{Currie2002} along with a finite search, or alternatively from the work of Shur~\cite{Shur2011}.  Gorbunova~\cite{Gorbunova2012} demonstrated that $\CRTS(k)=\frac{\lceil k/2\rceil+1}{\lceil k/2\rceil}$ for all $k\geq 6$, and conjectured that this formula holds for $k=4$ and $k=5$ as well.  In this article, we demonstrate that $\CRTS(4)=\tfrac{3}{2}$ and $\CRTS(5)=\tfrac{4}{3}$, confirming Gorbunova's conjecture.  Since $\CRT(4)\geq \tfrac{3}{2}$ and $\CRTS(5)\geq \tfrac{4}{3}$ are already known~\cite{Gorbunova2012}, we only need to show that there are $\tfrac{3}{2}^+$-free circular $4$-ary words of every length, and $\tfrac{4}{3}^+$-free circular $5$-ary words of every length.

Relatively less is known about the intermediate and weak circular repetition thresholds, even though the fact that $\CRTW(2)=2$ was noted in the work of Thue~(see \cite{Berstel1995}).  Aberkane and Currie~\cite{AberkaneCurrie2005} demonstrated that $\CRTI(2)=7/3.$  In \cite[Section 4]{Shur2011}, it is stated that $\CRTI(3)=\tfrac{7}{4}$, and that the result is obtained by adapting the technique of~\cite{Shur2010}, though the proof is omitted due to space constraints.  We give an alternate proof that $\CRTI(3)=\CRTW(3)=\RT(3)=\tfrac{7}{4}$ by adapting the method we use to prove $\CRTS(4)=\tfrac{3}{2}$.

All that is known about $\CRTI(k)$ and $\CRTW(k)$ for $k\geq 4$ are the bounds given by~(\ref{CRTorder}); note that $\RT(k)=\frac{k}{k-1}$ for all $k\geq 5$ and $\CRTS(k)=\frac{\lceil k/2\rceil+1}{\lceil k/2\rceil}$ for all $k\geq 6$, so
\[
\frac{k}{k-1}\leq \CRTW(k)\leq \CRTI(k)\leq \frac{\lceil k/2\rceil+1}{\lceil k/2\rceil} \mbox{ for all } k\geq 6.
\]
With the knowledge that $\CRTI(3)=\CRTW(3)=\RT(3)$, it seems reasonable to conjecture that $\CRTI(k)=\CRTW(k)=\RT(k)$ for all $k\geq 4$.  We note that this strengthens statement (2) of Conjecture 1 in~\cite{Shur2011}. However, it is likely that different techniques than those used here will be needed to prove this conjecture.


\section{$\CRTS(4)=\tfrac{3}{2}$}\label{4}

We first give a short description of the technique used to achieve the main result of this section.  We use a strong inductive argument to demonstrate that there are circular $\tfrac{3}{2}^+$-free $4$-ary words of every length.  The inductive step involves constructing longer circular $\tfrac{3}{2}^+$-free words from shorter ones.  We use uniform morphisms to do so.  However, we need two uniform morphisms of distinct (and relatively prime) sizes in order to show that there is a circular $\tfrac{3}{2}^+$-free $4$-ary word of every length.  First, we find a $9$-uniform morphism $\f$ and an $11$-uniform morphism $\g$ that preserve $\tfrac{3}{2}^+$-freeness.  To construct a $\tfrac{3}{2}^+$-free word of length $n$, we write $n=9k+11\ell$ for integers $k$ and $\ell$.  This is possible by the following well-known lemma; we use the version stated in~\cite{Skupien1993}.

\begin{lemma}\label{postage}
If $a,b\in\mathbb{N}$ and $\gcd(a,b) = 1$, then for each
$n\geq (a-1)(b-1)$, there is exactly one pair of nonnegative integers $r$ and $s$ such that $s<a$ and $n=ra+bs.$ \hfill \qed
\end{lemma}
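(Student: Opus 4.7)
The plan is to prove Lemma~\ref{postage} by establishing uniqueness and existence of the pair $(r,s)$ separately, in both cases working modulo $a$ and exploiting the fact that $\gcd(a,b) = 1$ makes $b$ a unit in $\mathbb{Z}/a\mathbb{Z}$.

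For uniqueness, suppose $n = ra + bs = r'a + bs'$ with $0 \le s, s' < a$ and $r, r' \ge 0$. Then $b(s - s') = (r' - r)a$, so $a$ divides $b(s-s')$. Since $\gcd(a,b) = 1$, $a$ divides $s - s'$. But $|s - s'| < a$, forcing $s = s'$ and hence $r = r'$. This part does not even need the hypothesis $n \ge (a-1)(b-1)$.

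For existence, since $b$ is invertible modulo $a$, there is a unique $s \in \{0, 1, \ldots, a-1\}$ with $bs \equiv n \pmod{a}$. Set $r = (n - bs)/a$, which is an integer; the remaining task (and the main technical point of the lemma) is to verify $r \ge 0$. Suppose for contradiction that $r < 0$. Then $bs > n$, and since $a$ divides $bs - n$, we get $bs - n \ge a$, i.e., $bs \ge n + a \ge (a-1)(b-1) + a = ab - b + 1$. Dividing by $b$ yields $s \ge a - 1 + 1/b$, so $s \ge a$, contradicting $s \le a - 1$. This gives existence.

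The only delicate step is choosing the threshold so that the residue-class argument forces $r \ge 0$; the bound $(a-1)(b-1)$ is exactly what makes the final inequality $bs \ge ab - b + 1$ incompatible with $s \le a - 1$, and one sees that it is sharp by checking that $n = ab - a - b$ (the Frobenius number) admits no representation of the required form. I would include the example $a = 3, b = 5$, $n = 7$ versus $n = 8$ to illustrate tightness, although as a citation to~\cite{Skupien1993} the proof may be given very briefly or simply referenced.
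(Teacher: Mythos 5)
Your proof is correct: the uniqueness argument via $a \mid b(s-s')$ and $|s-s'|<a$ is sound, and the existence argument (choose the unique $s\in\{0,\dots,a-1\}$ with $bs\equiv n \pmod a$, then show $r=(n-bs)/a\geq 0$ because $r<0$ would force $bs\geq n+a\geq ab-b+1$ and hence $s\geq a$) is exactly the standard residue-class proof, with the threshold $(a-1)(b-1)$ entering precisely where it should. There is nothing in the paper to compare it against: the lemma is stated with a \qed and attributed to \cite{Skupien1993}, i.e.\ the authors cite the result rather than prove it, so your self-contained argument supplies more than the paper does. Your remark on sharpness (the Frobenius number $ab-a-b$ has no representation at all, illustrated by $a=3$, $b=5$, $n=7$) is accurate but inessential to the lemma as stated.
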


\noindent
Finally, we take a $\tfrac{3}{2}^+$-free circular word $(w)$ of length $k+\ell$, write $w=uv$ where $|u|=k$ and $|v|=\ell$, and show that $(\f(u)\g(v))$ is also $\tfrac{3}{2}^+$-free.  We introduce some terminology for dealing with these ``mixed'' images of $\f$ and $\g$.

Let $A$ and $B$ be alphabets, and let $h\colon A^*\rightarrow B^*$ be a morphism.  Using the standard notation for images of sets, we have $h(A)=\{h(a)\colon\ a\in A\},$ which we refer to as the set of \emph{building blocks} of $h$.  If $H$ is a \emph{set} of morphisms from $A^*$ to $B^*$, then we let $H(A)$ denote the set of all images of letters of $A$ under all morphisms in $H$; that is,
\[
H(A)=\bigcup_{h\in H}h(A)=\{h(a)\colon\ h\in H, a\in A\}.
\]
We call the words in $H(A)$ the \emph{building blocks} of $H$.

\begin{definition}
Let $A$ and $B$ be alphabets and let $H$ be a set of nonerasing morphisms from $A^*$ to $B^*$.  An \emph{$H$-image} of a word $w=a_1\dots a_n$ with $a_i\in A$ is a word of the form
\[
h_1(a_1)\dots h_n(a_n),
\]
where $h_i\in H$ (note that the $h_i$ are not necessarily distinct).  Let $u$ be a nonempty factor of some word $v\in H(A)^+.$  We say that $w=a_1\dots a_n$ is an \emph{$H$-preimage} of $u$ if there is an $H$-image $h_1(a_1)\dots h_n(a_n)$ of $w$ that has $u$ as a factor, and $w$ is minimal in the sense that $h_1(a_1)\dots h_{n-1}(a_{n-1})$ and $h_2(a_2)\dots h_n(a_n)$ do not have $u$ as a factor.  The $H$-preimage of the empty word $\tt{\epsilon}$ is simply $\tt{\epsilon}$.
\end{definition}

\bigskip

We are now ready to define $\f$ and $\g$ and start working towards the proof that $\CRTS(4)=\tfrac{3}{2}$.  Let $\Sigma_4=\{\tt{0},\tt{1},\tt{2},\tt{3}\}$.  We define the morphism $\f:\Sigma_4^*\rightarrow \Sigma_4^*$ by
\begin{align*}
\tt{0}&\mapsto \tt{0121323\ 10}\\
\tt{1}&\mapsto \tt{1232030\ 21}\\
\tt{2}&\mapsto \tt{2303101\ 32}\\
\tt{3}&\mapsto \tt{3010212\ 03}
\end{align*}
and the morphism $\g:\Sigma_4^*\rightarrow \Sigma_4^*$ by
\begin{align*}
\tt{0}&\mapsto \tt{0121323\ 12\ 10}\\
\tt{1}&\mapsto \tt{1232030\ 23\ 21}\\
\tt{2}&\mapsto \tt{2303101\ 30\ 32}\\
\tt{3}&\mapsto \tt{3010212\ 01\ 03}
\end{align*}
Throughout this section, we let $\F=\{\f,\g\}.$  We make the following observations:
\begin{itemize}
\item The morphisms $\f$ and $\g$ both have cyclic structure: for $a\in\{\tt{1},\tt{2},\tt{3}\},$ $\f(a)$ is obtained from $\f(\tt{0})$ by adding $a$ to each letter of $\f(\tt{0})$ modulo $4$, and likewise for $\g$.
\item The word $\g(\tt{0})$ is obtained from $\f(\tt{0})$ by inserting the factor $\tt{12}$ after the length $7$ prefix (indicated above with spacing).
\item For all $a\in\Sigma_4$, $\f(a)$ and $\g(a)$ have the same prefix of length $8$, and the same suffix of length $2$.
\item $F(\Sigma_4)$ is a prefix code; the length $9$ prefixes of all building blocks of $F$ are distinct.
\item $F(\Sigma_4)$ is a suffix code; the length $3$ suffixes of all building blocks of $F$ are distinct.
\end{itemize}


If $P$ is a prefix code and $w$ is a nonempty factor of some element of $P^+$, a \emph{cut} of $w$ is a pair $(x,y)$ such that (i) $w=xy$; and (ii) for any words $p,s$ with $pws\in P^+,$ $px\in P^*$.  We use vertical bars to denote cuts.  For example, over the code $\{\tt{01},\tt{10}\},$ the word $\tt{11}$ has cut $\tt{1}\vert \tt{1}$.

We first prove that any sufficiently long factor of a word in $(F(\Sigma_4))^+$ has a cut, and that any factor of a word in $(F(\Sigma_4))^+$ with a cut has unique $F$-preimage.  These results are used frequently in the material that follows, sometimes without reference.

\begin{lemma}\label{cut}
Let $u$ be a factor of some $\F$-image.  If $|u|\geq 10$, then $u$ has a cut.
\end{lemma}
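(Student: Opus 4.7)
The plan is to locate, for any factor $u$ of an $\F$-image with $|u|\geq 10$, a single position in $u$ that lies at a block boundary in every embedding of $u$ as a factor of a word in $\F(\Sigma_4)^+$. Taking this position as the split point then gives the cut.

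For existence, I would use that every building block has length $9$ or $11$. Fix any embedding of $u$ as a factor of $B_1 B_2 \cdots B_n \in \F(\Sigma_4)^+$. Since $|u|\geq 10$, the window occupied by $u$ cannot sit strictly inside a single building block unless $|u|=10$ and that block is an $11$-block; in this exceptional case $u$ is either the length-$10$ prefix or length-$10$ suffix of the block, and so abuts a boundary at position $0$ or position $|u|$ of $u$. In every other situation, at least one block boundary lies strictly in the interior of $u$.

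The harder step is uniqueness: the position identified above must not depend on the embedding. For $|u|\geq 12$, $u$ properly contains at least one complete building block, so the prefix code property (distinct length-$9$ prefixes) together with the suffix code property (distinct length-$3$ suffixes) of $\F(\Sigma_4)$ lets me locate every internal block boundary of $u$ intrinsically from $u$ alone. For $|u|\in\{10,11\}$, I would enumerate the embedding types: $u$ either sits inside an $11$-block (as a prefix, suffix, or the whole block), coincides with an $11$-block, or straddles a boundary between two blocks at some offset $1\leq k\leq |u|-1$. Exploiting the cyclic structure of $\f$ and $\g$, one may assume the first block in the embedding starts with $\tt{0}$, and then directly check against the eight listed building blocks that distinct embedding types produce distinct words $u$.

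I expect the main obstacle to be the case $|u|=10$ with $u$ sitting inside an $11$-block. Here the cut is forced to an endpoint of $u$ (that is, $x=\tt{\epsilon}$ or $y=\tt{\epsilon}$), and one must rule out that the same word $u$ simultaneously arises from an alternate straddling embedding that would place a block boundary at an internal position. This is a finite but delicate case check, relying crucially on the fact that a length-$9$ prefix of a building block cannot appear shifted within another building block, and the symmetric fact for length-$3$ suffixes.
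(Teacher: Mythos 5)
Your reading of what a cut requires (a position that is a block boundary in \emph{every} embedding) is correct, and your existence step is fine, but the step that is supposed to deliver embedding-independence in the main case has a genuine gap. First, the claim that $|u|\geq 12$ forces $u$ to contain a complete building block is false: the last six letters of one block followed by the first six letters of the next give a factor of length $12$ containing no complete block, and in fact only $|u|\geq 21$ guarantees one. More importantly, even when $u$ does contain a complete block, the inference from ``$\F(\Sigma_4)$ is a prefix code and a suffix code'' to ``every internal block boundary of $u$ can be located from $u$ alone'' is unjustified. Bifix codes are not in general synchronizing on factors (for the bifix code $\{\tt{ab},\tt{ba}\}$ the factor $\tt{aba}$ occurs with its internal boundary in two different places and has no cut), and a building block, or a long prefix or suffix of one, could a priori occur straddling the boundary between two other blocks. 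Ruling out such non-aligned occurrences is precisely the content of the lemma; it does not follow from unique decodability, and it must be checked against concatenations of two blocks, not merely ``shifted within another building block'' as in your closing paragraph.

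Compounding this, one of the two ``crucial facts'' you invoke is false: the length-$3$ suffix $\tt{310}$ of $\f(\tt{0})$ occurs shifted inside the single block $\g(\tt{2})=\tt{23031013032}$ (positions $4$--$6$), so length-$3$ block suffixes do not synchronize. The paper instead works with the four length-$4$ block prefixes $\tt{0121},\tt{1232},\tt{2303},\tt{3010}$ and the eight length-$4$ block suffixes, verifying by an exhaustive check over all $\F$-images of words of length $2$ (which covers both in-block and straddling occurrences) that these short words occur in $\F$-images only aligned as block prefixes, respectively suffixes; a simple counting argument using the fact that blocks have length at most $11$ then shows every factor of length at least $10$ contains one of these words, yielding a cut immediately to its left, respectively right. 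Your proposal would need an analogous verified synchronization fact of this kind to close the uniqueness step; as written, the appeal to the bifix property does not supply it.
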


\begin{proof}
Consider the set $P=\{\tt{0121},\tt{1232},\tt{2303},\tt{3010}\}$ of all length $4$ prefixes of the building blocks of $\F$.  Observe that there is a cut to the left of every appearance of a member of $P$ in $u$, since each member of $P$ appears as a factor in an $\F$-image only as the prefix of a building block (it suffices to check all $\F$-images of words of length $2$).  On the other hand, consider the set
\[
S=\{\tt{2310},\tt{3021},\tt{0132},\tt{1203},\tt{1210},\tt{2321},\tt{3032},\tt{0103}\}
\]
of all length $4$ suffixes of the building blocks of $\F$.  There is a cut to the right of every appearance of a member of $S$ in $u$, since each member of $S$ appears as a factor in an $\F$-image only as the suffix of a building block (again, we need only check all $\F$-images of words of length $2$).

Let $|u|\geq 10.$  Since the building blocks of $\F$ have length at most $11$, $u$ must contain either the prefix of length $4$ of some building block of $\F$ (in which case there is a cut to the left of this factor), or the suffix of length $4$ of some building block of $\F$ (in which case there is a cut to the right of this factor).
\end{proof}

\begin{lemma}\label{preimage}
Let $u$ be a factor of some $\F$-image.  If $u$ has a cut, then $u$ has unique $F$-preimage.
\end{lemma}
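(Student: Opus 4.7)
The plan is to use the cut as a known building-block boundary in any $\F$-image containing $u$, and then parse $u$ into (possibly partial) building blocks on each side of the cut---the parse to the right using the prefix-code property and the parse to the left using the suffix-code property. Write $u = xy$ where $x \vert y$ is the given cut, so that in any $\F$-image $pus \in \F(\Sigma_4)^+$ the position between $x$ and $y$ falls at a boundary between two building blocks.

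For the rightward parse I would process $y$ from left to right as follows. While $|y| \geq 9$, the first nine characters of $y$ form the length-$9$ prefix of a unique building block of $\F$ (because the eight such prefixes are pairwise distinct, by the fourth bullet above), and this identifies both the next image letter $a_i$ and the next morphism $h_i \in \F$; if the corresponding block fits inside the remaining $y$ then delete its characters and repeat, otherwise record $a_i$ and stop. If the procedure stops with the remaining prefix of $y$ nonempty but shorter than nine characters, the cyclic structure of $\f$ and $\g$ guarantees that every building block begins with the letter whose image it is, so this trailing partial block still uniquely determines one more letter. The leftward parse of $x$ is entirely analogous: read $x$ from right to left, use the fact that the eight length-$3$ suffixes of the building blocks of $\F$ are pairwise distinct to peel off full blocks, and use the cyclic structure to handle the leftmost partial block (trailing segments of length $1$ or $2$ still determine their image letter uniquely, since every building block ends in the letter whose image it is).

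Concatenating the letters produced by the leftward parse (in their natural order) followed by those of the rightward parse yields a word $w$ depending only on $u$. By construction $w$ is an $\F$-preimage of $u$, and minimality is immediate: dropping the first (respectively last) letter of $w$ deletes a partial block from the left (respectively right) end of the corresponding image, so that image no longer contains $u$. Uniqueness then follows because any $\F$-preimage of $u$ witnesses an $\F$-image in which the cut sits at a block boundary, and the same parse must then produce the same sequence of letters. The one delicate point in the argument is the treatment of the two end segments where a building block is only partially visible in $u$; this reduces to the routine observation, just cited, that for each $j \in \{1,\dots,11\}$ the length-$j$ prefix and length-$j$ suffix of every building block begin and end respectively with the associated image letter, an immediate consequence of the cyclic structure of $\f$ and $\g$.
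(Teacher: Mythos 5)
Your proposal is correct and follows essentially the same route as the paper: the paper writes $u=s\vert v\vert p$ with $v\in \F(\Sigma_4)^*$ uniquely decodable because $\F(\Sigma_4)$ is a (bifix) code, and pins down the partial end blocks using the fact that $\f(a)$ and $\g(a)$ begin and end in $a$ — exactly the facts your explicit two-directional greedy parse uses (distinct length-$9$ prefixes, distinct length-$3$ suffixes, blocks beginning/ending in their letter). Your version just unfolds the abstract code property into an algorithmic parse; the only cosmetic slip is the parenthetical restriction of the leftmost partial block to lengths $1$ or $2$ (it can be any proper suffix of a block), but the principle you cite already covers all such lengths.
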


\begin{proof}
Suppose that $u$ has a cut.  Since $\F(\Sigma_4)$ is a bifix code, we can write $u=s\vert v\vert p,$ where $s$ is a proper suffix of some word in $\F(\Sigma_4)$, $v\in \F(\Sigma_4)^*$, and $p$ is a proper prefix of some word in $\F(\Sigma_4)$.  Since $v\in \F(\Sigma_4)^*$ and $\F(\Sigma_4)$ is a code, $v$ has unique $\F$-preimage.  Note that for any letter $a\in\Sigma_4$, both $\f(a)$ and $\g(a)$ begin and end in $a$.  Thus if $s$ is nonempty, then the $\F$-preimage of $s$ is completely determined by the last letter of $s$, while if $p$ is nonempty, then the $\F$-preimage of $p$ is completely determined by the first letter of $p$.
\end{proof}

Next, we show that the individual morphisms $\f$ and $\g$ are $\tfrac{3}{2}^+$-free.  That is, they preserve $\tfrac{3}{2}^+$-freeness for linear words.

\begin{lemma}\label{Ordinary}
Let $w\in \Sigma_4^+$.  If $w$ is $\tfrac{3}{2}^+$-free, then $\f(w)$ and $\g(w)$ are $\tfrac{3}{2}^+$-free.
\end{lemma}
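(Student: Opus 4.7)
I would proceed by contradiction: assume $w \in \Sigma_4^+$ is $\tfrac{3}{2}^+$-free, but $\f(w)$ contains a factor $u$ of minimal period $p$ and exponent strictly greater than $\tfrac{3}{2}$, so $|u| > \tfrac{3p}{2}$. The argument for $\g$ is identical with $9$ replaced by $11$. The overall strategy is to show that $p$ must be a multiple of $9$, so that the scaled period $q = p/9$ transfers down to the $\f$-preimage $w'$ of $u$ and produces a $\tfrac{3}{2}^+$-power inside $w$.

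For $p$ above a small threshold (concretely once $|u| \geq p + 10$, which is guaranteed as soon as $p$ exceeds a small constant since $|u| > \tfrac{3p}{2}$), Lemma~\ref{cut} supplies a cut $j$ of $u$ for which the shifted position $j+p$ or $j-p$ also lies in range. The cut is certified by the appearance of an element of the length-$4$ prefix set $P$ or suffix set $S$ from the proof of Lemma~\ref{cut}; by the period-$p$ relation the same window reappears $p$ positions over, giving another cut there. Every cut of $u$ lies at a block boundary of the surrounding $\f$-image, so all cuts of $u$ share a common residue modulo $9$, and hence $p \equiv 0 \pmod{9}$. Write $p = 9q$.

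Now let $w' = a_1 \cdots a_m$ be the $\f$-preimage of $u$, so $\f(w') = s' u t'$ with $|s'|, |t'| < 9$. For any pair of blocks $\f(a_k), \f(a_{k+q})$ both fully inside $u$, periodicity plus injectivity of $\f$ gives $a_k = a_{k+q}$. For boundary pairs where one block is only partially inside $u$, the cyclic structure of $\f$ (each $\f(a)$ is obtained from $\f(\tt{0})$ by adding $a$ modulo $4$ letter-wise, so $\f(a)$ and $\f(b)$ disagree in every intra-block position whenever $a \neq b$) means a single matching intra-block position suffices to conclude $a_k = a_{k+q}$; at least one such position is always available because the $u$-overlaps of the two blocks coincide in intra-block coordinates. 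Hence $w'$ has period $q$, so $\exp(w') \geq m/q \geq (|u|/9)/(p/9) = |u|/p > \tfrac{3}{2}$. Since $w'$ is a factor of $w$ by construction, this contradicts $w$ being $\tfrac{3}{2}^+$-free.

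The residual small-$p$ cases (below the cut-alignment threshold) are settled by a direct finite check: for each $p$ up to the threshold, any bad $u$ has length bounded by a small constant and is contained in $\f(w_0)$ for some $\tfrac{3}{2}^+$-free $w_0 \in \Sigma_4^*$ of bounded length, so one enumerates these $w_0$ and verifies that $\f(w_0)$ and $\g(w_0)$ contain no factor of exponent greater than $\tfrac{3}{2}$. The main obstacle is the boundary-block analysis in the period-transfer step, where the cyclic and bifix code observations recorded in the bullet-point list preceding Lemma~\ref{cut} do the structural heavy lifting; everything else is arithmetic bookkeeping and a routine finite computation.
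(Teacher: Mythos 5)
Your argument is correct, but it takes a genuinely different route from the paper's. You fix the minimal period $p$ of an offending factor $u$, use the length-$4$ markers from the proof of Lemma~\ref{cut} together with the period-$p$ relation to exhibit two cuts of $u$ at distance exactly $p$, conclude $9\mid p$ (resp.\ $11\mid p$) because every cut must fall on a block boundary of the ambient $\f$- (resp.\ $\g$-) image, and then transfer the period to the preimage, settling the two partially covered boundary blocks via the fact that distinct building blocks differ in every position; all small periods are absorbed into a finite check (note that for small $p$ you should first truncate a long low-period factor to length just over $\tfrac{3p}{2}$ so that its length really is bounded). The paper never argues about divisibility of the period: it extracts a factor $xyx$ with $|x|>|y|$ (the standard reformulation of exponent greater than $\tfrac{3}{2}$), applies Lemma~\ref{cut} to $x$ itself, so that both occurrences of $x$ decompose identically as $s_x\vert m_x\vert p_x$, and pulls the whole configuration back to a factor $bx'ay'bx'a$ of $w$ with $|bx'a|>|y'|$ by uniformity, which is an immediate contradiction. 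The trade-off: your period-transfer argument is more generic --- it works essentially verbatim for other exponents and block lengths and isolates exactly why synchronization forces the period to be a multiple of the block length --- but it requires the cut-translation threshold and the boundary-block bookkeeping, and its residual finite search is somewhat larger than the paper's, whose case split $|x|\le 9$ versus $|x|\ge 10$ reduces the computation to images of $\tfrac{3}{2}^+$-free words of length $4$.
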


\begin{proof}
Let $w$ be $\tfrac{3}{2}^+$-free, and let $h\in\{\f,\g\}$.  Suppose towards a contradiction that $h(w)$ has a factor of exponent greater than $\tfrac{3}{2}$.  Then in particular, $h(w)$ has a factor of the form $xyx$ for words $x,y\in\Sigma_4^*$ with $|xyx|>\tfrac{3}{2}|xy|$, or equivalently, $|x|>|y|.$

First suppose $|x|\leq 9$.  Then $|y|\leq 8$ and $|xyx|\leq 26$, so $xyx$ appears in some word $h(u)$ with $u\in\Sigma_4^+$ a $\tfrac{3}{2}^+$-free word of length $4$.  Eliminating this possibility by an exhaustive search, we may assume that $|x|\geq 10$.  Then by Lemma~\ref{cut}, $x$ has a cut.  So $x$ has the form $s_x\vert m_x \vert p_x$, where $s_x$ is a proper suffix of some building block of $h$ and $p_x$ is a proper prefix of some building block of $h$.   This means that $y=s_y\vert m_y\vert p_y$, where $p_xs_y$ and $p_ys_x$ are either empty, or building blocks of $h$.  Let $m_x$ and $m_y$ have $h$-preimages $x'$ and $y'$, respectively, and let $p_xs_y$ and $p_ys_x$ have $h$-preimages $a$ and $b$, respectively.  Note that $a,b\in\Sigma_4\cup\{\tt{\epsilon}\}.$  Then $xyx$ has $h$-preimage
\[
bx'ay'bx'a.
\]
Since $h$ is uniform and $|x|>|y|,$ it follows that $|bx'a|>|y'|$, and this contradicts the assumption that $w$ is $\tfrac{3}{2}^+$-free.
\end{proof}

Now that we know that $\f$ and $\g$ preserve $\tfrac{3}{2}^+$-freeness for linear words, we are ready to show that we can construct longer $\tfrac{3}{2}^+$-free circular words from shorter ones using $\f$ and $\g$.  For a word $w$ of length $n$ and an integer $m\in\{1,\dots,n\},$ we let $p_m(w)$ denote the prefix of $w$ of length $m$, and we let $s_m(w)$ denote the suffix of $w$ of length $m$.

\begin{theorem}\label{preserve}
Let $k\geq 8$ and $2\leq \ell\leq 10$.  Let $u,v\in\Sigma_4^+$ be words of length $k$ and $\ell$, respectively.  If the circular word $\circ{uv}$ is $\tfrac{3}{2}^+$-free, then so is the circular word $\circ{\f(u)\g(v)}.$
\end{theorem}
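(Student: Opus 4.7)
The plan is to mimic the proof of Lemma~\ref{Ordinary}, adapted to the circular setting and to the mixed use of $\f$ and $\g$. Suppose, for contradiction, that $\circ{\f(u)\g(v)}$ contains a factor $xyx$ with $|x| > |y|$.

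I would first handle the short case $|x| \leq 9$ by a finite verification. In this case $|xyx| < 30$ and, since every building block has length at least $9$, the factor $xyx$ sits inside an $\F$-image of a factor $w'$ of $\circ{uv}$ of length at most $5$. Because $|uv| = k + \ell \geq 10$, such a $w'$ is itself a $\tfrac{3}{2}^+$-free $4$-ary word, and an exhaustive check (over all such $w'$ and over all assignments of $\f$ or $\g$ to each of its positions that are consistent with the cyclic $u$/$v$ split under the hypotheses $k \geq 8$, $\ell \leq 10$) rules out any $\tfrac{3}{2}^+$-power of length less than $30$ in the image.

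For the principal case $|x| \geq 10$, Lemma~\ref{cut} provides a cut in $x$, and Lemma~\ref{preimage} produces a unique $\F$-preimage of $xyx$. As in the proof of Lemma~\ref{Ordinary}, this preimage takes the form $X'Y'X'$, with $X' = bx'a$ the preimage corresponding to each copy of $x$ and $Y' = y'$ that of the interior of $y$. Because $xyx$ lies inside one conjugate of $\f(u)\g(v)$, the preimage spans at most one cyclic copy of the underlying block structure, so $|X'Y'X'| \leq k + \ell$; hence $X'Y'X'$ is a factor of the circular word $\circ{uv}$. The goal is to conclude $|X'| > |Y'|$: this would make $X'Y'X'$ a factor of $\circ{uv}$ of exponent $> \tfrac{3}{2}$, contradicting $\tfrac{3}{2}^+$-freeness of $\circ{uv}$.

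The main obstacle, absent from the linear setting of Lemma~\ref{Ordinary}, is non-uniform expansion: $\f$-blocks contribute length $9$ while $\g$-blocks contribute length $11$, so $|x| > |y|$ does not transfer verbatim to $|X'| > |Y'|$. I would case-split on the pattern of $\f$- and $\g$-blocks covered by $X'$. If the pattern is mixed, then for the two occurrences of $X'$ to have matching block patterns the cyclic offset $|X'|+|Y'|$ between them would have to be a period of the cyclic block-pattern (which has the form ``$k$ consecutive $\f$-blocks followed by $\ell$ consecutive $\g$-blocks''); but the only such period is $k+\ell$, which exceeds the available offset by the bound $2|X'|+|Y'| \leq k+\ell$, so this case cannot occur. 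If the pattern is constant (all-$\f$ or all-$\g$), the two copies of $X'$ lie in $\f$-arcs (consisting of $k$ consecutive $\f$-blocks) or $\g$-arcs (of at most $\ell \leq 10$ consecutive $\g$-blocks). The ``same arc'' subcase reduces to the uniform analysis of Lemma~\ref{Ordinary}. The ``different arc'' subcase requires a direct computation: parametrising by the starting offsets of the two occurrences of $X'$ within their respective arcs and combining the arc-length bound, the preimage-length bound $2|X'|+|Y'| \leq k+\ell$, and $|x| > |y|$, one shows that no integer offset is compatible with $|X'| \leq |Y'|$; the hypotheses $k \geq 8$ and $\ell \leq 10$ are precisely what makes the resulting integer arithmetic close. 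I expect this cross-arc subcase, particularly in its $\g$-arc variant, to be the most delicate step of the proof.
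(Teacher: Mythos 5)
Your overall frame --- a finite check for short $x$, then cuts, unique $\F$-preimages, and a case analysis on how the two copies of $x$ sit relative to the block structure --- matches the paper's strategy, and your periodicity argument for the mixed block pattern is a legitimate substitute for the paper's observation that $x$ cannot contain the once-occurring factors $s_3(U)p_9(V)$ or $s_3(V)p_9(U)$. The genuine gap is in the constant-pattern case. If both copies of $x$ are made of $\g$-blocks they necessarily lie in the single $\g$-arc (your ``different arc'' subcase cannot occur once $x$ contains an identified block, since $\f$-blocks occur only inside $\f(u)$ and $\g$-blocks only inside $\g(v)$); the dangerous configuration is instead that both copies of $x$ lie in the $\g$-arc while $y$ wraps through the entire $\f$-arc. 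You assign this to the ``same arc'' subcase and claim it reduces to the uniform analysis of Lemma~\ref{Ordinary}, but it does not: $xyx$ is then not a factor of any $\g$-image (the middle of $y$ consists of $\f$-blocks), and, worse, the target inequality $|X'|>|Y'|$ can simply fail, because $x$ expands by $11$ per preimage letter while $y$ can expand by as little as $9$, so $|x|>|y|$ is compatible with $|X'|\leq |Y'|$. No preimage/exponent argument can close this case; the paper's Case 4 instead contradicts $|x|>|y|$ directly: $x$ is confined to $s_2(U)\g(v)p_8(U)$ and occurs there twice without overlap, so $|x|\leq (11\ell+10)/2\leq 60$, while $y$ contains all of $\f(u)$ except short flanks, so $|y|\geq 9k-10\geq 62$. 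This is exactly where $k\geq 8$ and $2\leq\ell\leq 10$ enter; in your plan that arithmetic is attached to the vacuous ``different arc'' subcase, so the configuration that is the real crux of the theorem is left unproved.

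Two secondary points. First, you assert that the preimage $bx'ay'bx'a$ has length at most $k+\ell$ and is therefore a factor of $(uv)$. This fails when $xyx$ wraps so far around the circle that its trailing $p_x$ and leading $s_x$ lie in the same building block of $(\f(u)\g(v))$: the preimage then repeats that block's letter at both ends, has length $k+\ell+1$, and is not a factor of $(uv)$. The paper's Subcase 2b exists precisely for this, and replaces the desired conclusion by showing $|x'a|>|y'b|$, using that $x'ay'bx'a$ and $bx'ay'bx'$ do occur in $(uv)$. Second, your main case starts at $|x|\geq 10$, but the block-type identification (a full length-$9$ prefix or length-$3$ suffix of a building block inside $x$), on which your matching-pattern and arc-localization claims depend, is only guaranteed for $|x|\geq 11$; the paper's finite search therefore covers $|x|\leq 10$ rather than $|x|\leq 9$.
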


\begin{proof}
Let $u=u_1u_2\dots u_k$ and $v=v_1v_2\dots v_\ell$ and suppose that $\circ{uv}$ is $\tfrac{3}{2}^+$-free.  For ease of notation, let $U=\f(u)$ and $V=\g(v).$ 
Suppose towards a contradiction that $\circ{UV}$ has a factor of exponent greater than $\tfrac{3}{2}.$  Then some conjugate of $UV$ has a factor $xyx$ with $|x|>|y|$.  Suppose first that $|x|\leq 10$.  Then $|y|\leq 9$ and thus $|xyx|\leq 29.$  It follows that $xyx$ is a factor of some $F$-image $\f(w_1)\g(w_2)\f(w_3)$, where $w_1,w_2,w_3 \in \Sigma_4^*$ satisfy $|w_1|+|w_2|+|w_3|=5$ and $w_1w_2w_3$ is $\tfrac{3}{2}^+$-free.  By exhaustive search, we eliminate this possibility and we may now assume that $|x|\geq 11.$

We first claim that $x$ cannot contain the factor $s_3(U)p_9(V).$  By inspection, $s_3(U)p_9(V)$ has the cut $s_3(U)\vert p_9(V)$.  No matter the exact identity of $u_k$, the suffix $s_3(U)$ is not a suffix of any building block of $\g$.  Similarly, the prefix $p_9(V)$ is not a prefix of any building block of $\f$, no matter the identity of $v_1$.  So the only place that the factor $s_3(U)p_9(V)$ appears as a factor of the circular word $(UV)$ is at the boundary between $U$ and $V$. Since $x$ appears at least twice in some conjugate of $UV$ (as $xyx$ is a factor of some conjugate of $UV$), we conclude that $s_3(U)p_9(V)$ is not a factor of $x$.  By a similar argument, $x$ cannot contain the factor $s_3(V)p_9(U).$

\begin{center}
\begin{tikzpicture}[scale=0.85]
\draw (0,0.6) rectangle (7,1.2) node[pos=.5] {$U$};
\draw (7,0.6) rectangle (15.5,1.2) node[pos=.5] {$V$};
\draw (0,0) rectangle (2,0.6) node[pos=.5] {$\f(u_1)$};
\draw (2,0) rectangle (4,0.6) node[pos=.5] {$\f(u_2)$};
\draw (4.5,0.3) node {$\cdots$};
\draw (5,0) rectangle (7,0.6) node[pos=.5] {$\f(u_k)$};
\draw (7,0) rectangle (9.5,0.6) node[pos=.5] {$\g(v_1)$};
\draw (9.5,0) rectangle (12,0.6) node[pos=.5] {$\g(v_2)$};
\draw (12.5,0.3) node {$\cdots$};
\draw (13,0) rectangle (15.5,0.6) node[pos=.5] {$\g(v_\ell)$};
\draw (0,0) rectangle (2,-0.6) node[pos=.5] {\footnotesize $p_9(U)$};
\draw (6,0) rectangle (7,-0.6) node[pos=.5] {\footnotesize $s_3(U)$};
\draw (7,0) rectangle (9,-0.6) node[pos=.5] {\footnotesize $p_9(V)$};
\draw (14.5,0) rectangle (15.5,-0.6) node[pos=.5] {\footnotesize $s_3(V)$};
\end{tikzpicture}
\end{center}

Since $|x|\geq 11,$ $x$ must contain either the prefix of length $9$ of some building block of $F$, or the suffix of length $3$ of some building block of $F.$  If $x$ contains the length $9$ prefix or the length $3$ suffix of some building block of $\f,$ then $x$ appears only inside $s_2(V)Up_8(V)$ by the argument of the previous paragraph.  On the other hand, if $x$ contains the length $9$ prefix or the length $3$ suffix of some building block of $\g$, then $x$ appears only inside $s_2(U)Vp_8(U).$  This analysis leads to four separate cases, depending on the positioning of the entire factor $xyx$.

\noindent
\textbf{Case 1:} $xyx$ is a factor of $s_2(V) U p_8(V)$.

\begin{center}
\begin{tikzpicture}
\draw (0,0) rectangle (1,0.6) node[pos=.5] {\footnotesize $s_2(V)$};
\draw (1,0) rectangle (5,0.6) node[pos=.5] {$U$};
\draw (5,0) rectangle (6,0.6) node[pos=.5] {\footnotesize $p_8(V)$};

\draw[dashed] (0,0) -- (0,-0.6);
\draw[dashed] (2,0) -- (2,-0.6);
\draw[dashed] (3.5,0) -- (3.5,-0.6);
\draw[dashed] (5.5,0) -- (5.5,-0.6);

\draw (1,-0.3) node {$x$};
\draw (2.75,-0.3) node {$y$};
\draw (4.5,-0.3) node {$x$};
\draw (0,-0.6) -- (5.5,-0.6);
\end{tikzpicture}
\end{center} 

\noindent
Observe that $s_2(V)$ is the suffix of length $2$ of $\g(v_\ell)$, and that this is the same as the length $2$ suffix of $\f(v_\ell)$.  Similarly, $p_8(V)$ is the prefix of length $8$ of both $\g(v_1)$ and $\f(v_1)$.  Thus $xyx$ is a factor of $\f(v_\ell)U\f(v_1)=\f(v_\ell)\f(u)\f(v_1)=\f(v_\ell uv_1)$.  By Lemma~\ref{Ordinary}, $v_\ell uv_1$ must have a factor with exponent greater than $\tfrac{3}{2}$.  But since $\ell\geq 2$, $v_\ell uv_1$ is a factor of $(uv)$, and this contradicts the assumption that $(uv)$ is $\tfrac{3}{2}^+$-free.

\medskip

\noindent
\textbf{Case 2:} $x$ is a factor of $s_2(V)Up_8(V)$ and $xyx$ has $V$ as a factor.

\begin{center}
\begin{tikzpicture}
\path[use as bounding box] (-4,2) rectangle (8.5,-0.6);
\draw (-3,0) -- (0,0);
\draw (-3,0.6) -- (0,0.6);
\draw (-3,0) -- (-3.1,0.3) -- (-2.9,0.3) -- (-3,0.6);
\draw (1,0) rectangle (2.6,0.6) node[pos=.5] {$V'$};
\draw (3.6,0) -- (7.5,0);
\draw (3.6,0.6) -- (7.5,0.6);
\draw (7.5,0) -- (7.4,0.3) -- (7.6,0.3) -- (7.5,0.6);
\draw (0,0) rectangle (1,0.6) node[pos=.5] {\footnotesize $p_8(V)$};
\draw (2.6,0) rectangle (3.6,0.6) node[pos=.5] {\footnotesize $s_2(V)$};

\draw[dashed] (-2.8,0) -- (-2.8,-0.6);
\draw[dashed] (0.4,0) -- (0.4,-0.6);
\draw[dashed] (3.6,0) -- (3.6,-0.6);
\draw[dashed] (6.8,0) -- (6.8,-0.6);
\draw (-1.2,-0.3) node {$x$};
\draw (2,-0.3) node {$y$};
\draw (5.2,-0.3) node {$x$};
\draw (-2.8,-0.6) -- (6.8,-0.6);

\draw[->] (7.7,0.3) to[out=30,in=150] (-3.2,0.3);
\draw (6.5,0.3) node {$U$};

\draw [decorate,decoration={brace,amplitude=4pt}]
(0,0.7) -- (3.6,0.7) node [above,black,midway,yshift=3pt]{$V$};
\end{tikzpicture}
\end{center}

\noindent
Let $x=s_x\vert m_x\vert p_x,$ and $y=s_y\vert m_y\vert p_y$, where $s_x$ and $s_y$ are proper suffixes of building blocks of $F$ and $p_x$ and $p_y$ are proper prefixes of building blocks of $F$.  Then 
\[
xyx=s_x\vert m_x \vert p_xs_y \vert m_y \vert p_ys_x \vert m_x \vert p_x
\]
Let $m_x$ have $F$-preimage $x'$ and $m_y$ have $F$-preimage $y'$.  Then $|m_x|=9|x'|$ and $|m_y|\geq 9|y'|.$  Now $p_xs_y$ and $p_ys_x$ are each either a single building block or the empty word.  Let $p_xs_y$ have $F$-preimage $a\in\Sigma_4\cup\{\tt{\epsilon}\}$ and $p_ys_x$ have $F$-preimage $b\in\Sigma_4\cup \{\tt{\epsilon}\}.$  Then the $F$-preimage of $xyx$ is
\[
bx'ay'bx'a.
\]
Note that this preimage may not be a factor of the circular word $(uv)$ if $xyx$ is so long that the factor $p_x$ at the end of $xyx$ and the factor $s_x$ at the beginning of $xyx$ are actually part of the same building block $\f(u_i).$  We break into two subcases:

\noindent
\textbf{Subcase 2a:} $bx'ay'bx'a$ is a factor of $(uv).$

We argue that $|bx'a|>|y'|,$ which contradicts the assumption that $(uv)$ is $\tfrac{3}{2}^+$-free.  We have
\[
9|bx'a|\geq |s_x|+|m_x|+|p_x|= |x| >|y|\geq |m_y| \geq 9|y'|,
\]
from which the desired inequality follows.

\noindent
\textbf{Subcase 2b:} $bx'ay'bx'a$ is not a factor of $(uv).$

Then neither $a$ nor $b$ is equal to $\tt{\epsilon}$, since the ends of $xyx$ are in the same building block $\f(u_i)$.  Note that both $x'ay'bx'a$ and $bx'ay'bx'$ appear in $(uv).$  We claim that $|x'a|>|y'b|,$ which contradicts the assumption that $(uv)$ is $\tfrac{3}{2}^+$-free.  Since $|x|>|y|,$ we have $|s_xm_xp_x|>|s_ym_yp_y|.$  Thus, $|m_x|>|m_y|+|s_y|+|p_y|-|s_x|-|p_x|$.  Further, since $p_x$ and $s_x$ appear in the same building block $\f(u_i)$ and must not overlap, we have $|p_x|+|s_x|\leq 9.$  Since $|p_xs_y|\geq 9$ and $|p_ys_x|\geq 9$ we have $|s_y|\geq 9-|p_x|$ and $|p_y|\geq 9-|s_x|,$ so
\[
|m_x|>|m_y|+|s_y|+|p_y|-|s_x|-|p_x|\geq |m_y|+18-2(|p_x|+|s_x|)\geq |m_y|.
\]
Now
\[
9|x'|= |m_x|>|m_y|\geq 9|y'|,
\]
and $|x'a|=|x'|+1>|y'|+1=|y'b|$ follows.
\medskip

\noindent
\textbf{Case 3:} $xyx$ is a factor of $s_2(U) Vp_8(U)$.

\begin{center}
\begin{tikzpicture}
\draw (-0.5,0) rectangle (0.5,0.6) node[pos=.5] {\footnotesize $s_2(U)$};
\draw (0.5,0) rectangle (5,0.6) node[pos=.5] {$V$};
\draw (5,0) rectangle (6,0.6) node[pos=.5] {\footnotesize $p_8(U)$};

\draw[dashed] (1,0) -- (1,-0.6);
\draw[dashed] (2.4,0) -- (2.4,-0.6);
\draw[dashed] (3.4,0) -- (3.4,-0.6);
\draw[dashed] (4.8,0) -- (4.8,-0.6);

\draw (1.7,-0.3) node {$x$};
\draw (2.9,-0.3) node {$y$};
\draw (4.1,-0.3) node {$x$};
\draw (1,-0.6) -- (4.8,-0.6);
\end{tikzpicture}
\end{center}

\noindent
Observe that $s_2(U)$ is the suffix of length $2$ of $\f(u_k)$, and that this is the same as the length $2$ suffix of $\g(u_k)$.  Similarly, $p_8(U)$ is the prefix of length $8$ of both $\f(u_1)$ and $\g(u_1)$.  Thus, $xyx$ is a factor of $\g(u_k)V\g(u_1)=\g(u_k)\g(v)\g(u_1)=\g(u_k v u_1)$. By Lemma~\ref{Ordinary}, $u_kvu_1$ must have a factor with exponent greater than $\tfrac{3}{2}$.  But since $k\geq 8$, $u_k vu_1$ is a factor of $(uv)$, and this contradicts the assumption that $(uv)$ is $\tfrac{3}{2}^+$-free.

\medskip

\noindent
\textbf{Case 4:}  $x$ is a factor of $s_2(U)V p_8(U)$ and $xyx$ has $U$ as a factor.

\begin{center}
\begin{tikzpicture}
\path[use as bounding box] (-4,2) rectangle (7.5,-0.6);
\draw (0,0) -- (-3,0);
\draw (-3,0) -- (-3.1,0.3) -- (-2.9,0.3) -- (-3,0.6);
\draw (-3,0.6) -- (0,0.6);
\draw (3,0) -- (6.5,0);
\draw (6.5,0) -- (6.4,0.3) -- (6.6,0.3) -- (6.5,0.6);
\draw (6.5,0.6) -- (3,0.6);
\draw (5.5,0.3) node {$V$};
\draw (1,0) rectangle (2.6,0.6) node[pos=.5] {$U'$};

\draw (0,0) rectangle (1,0.6) node[pos=.5] {\footnotesize $p_8(U)$};
\draw (2.6,0) rectangle (3.6,0.6) node[pos=.5] {\footnotesize $s_2(U)$};

\draw[dashed] (-2.3,0) -- (-2.3,-0.6);
\draw[dashed] (0.8,0) -- (0.8,-0.6);
\draw[dashed] (3.1,0) -- (3.1,-0.6);
\draw[dashed] (6.2,0) -- (6.2,-0.6);
\draw (-0.75,-0.3) node {$x$};
\draw (1.95,-0.3) node {$y$};
\draw (4.65,-0.3) node {$x$};
\draw (-2.3,-0.6) -- (6.2,-0.6);
\draw[->] (6.7,0.3) to[out=30,in=150] (-3.2,0.3);

\draw [decorate,decoration={brace,amplitude=4pt}]
(0,0.7) -- (3.6,0.7) node [above,black,midway,yshift=3pt]{$U$};
\end{tikzpicture}
\end{center}

\noindent
Since $y$ contains $U'$ as a factor,
\[
|y|\geq |U'|=|U|-10=|\f(u)|-10=9|u|-10=9k-10\geq 62,
\]
from the assumption that $|u|=k\geq 8.$  On the other hand, since $x$ appears twice (without overlapping itself) in $s_2(U)Vp_8(U),$ we conclude that
\[
|x|\leq \frac{|V|+10}{2}=\frac{|\g(v)|+10}{2}=\frac{11|v|+10}{2}=\frac{11\ell+10}{2}\leq 60,
\]
from the assumption that $|v|=\ell\leq 10.$  But this contradicts the assumption that $|x|>|y|.$
\end{proof}

%
%

We are now ready to prove the main result of this section.

\begin{theorem}
For every $n\in\mathbb{N}$, there is a $\tfrac{3}{2}^+$-free circular $4$-ary word of length $n$.
\end{theorem}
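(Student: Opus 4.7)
The plan is to prove the statement by strong induction on $n$. The inductive step uses Theorem~\ref{preserve} to build a $\tfrac{3}{2}^+$-free circular $4$-ary word of length $n$ from a strictly shorter one, via a ``mixed'' image under $\f$ and $\g$.

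First I would fix a threshold $N$ and verify the base case by an exhaustive finite search: for every $n \leq N$, exhibit a $\tfrac{3}{2}^+$-free circular $4$-ary word of length $n$. For the inductive step at $n > N$, I would apply Lemma~\ref{postage} with $a = 9$ and $b = 11$ to obtain the unique decomposition $n = 9k + 11\ell$ with $0 \leq \ell \leq 8$. If $\ell \in \{2,\dots,8\}$, then taking $N$ sufficiently large (the value $N = 181$ should suffice) forces $k \geq 8$ automatically. If instead $\ell \in \{0, 1\}$, I would rewrite $n = 9(k - 11) + 11(\ell + 9)$, which shifts $\ell$ into $\{9, 10\}$; again $N \geq 181$ guarantees $k - 11 \geq 8$. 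In every subcase I obtain a decomposition $n = 9k' + 11\ell'$ satisfying the hypotheses $k' \geq 8$ and $2 \leq \ell' \leq 10$ of Theorem~\ref{preserve}.

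The inductive hypothesis then supplies a $\tfrac{3}{2}^+$-free circular $4$-ary word $(w)$ of length $k' + \ell'$, which is strictly less than $n = 9k' + 11\ell'$. Writing $w = uv$ with $|u| = k'$ and $|v| = \ell'$, Theorem~\ref{preserve} guarantees that $(\f(u)\g(v))$ is $\tfrac{3}{2}^+$-free; and it has length $9k' + 11\ell' = n$, completing the inductive step.

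The main obstacle is the base case: one must confirm that for every length from $1$ up to $N$ there exists a $\tfrac{3}{2}^+$-free circular $4$-ary word. Since $\tfrac{3}{2} > \RT(4) = \tfrac{7}{5}$ and $4$-ary words are plentiful, the search is expected to succeed, but it amounts to a mechanical verification (either by hand for tiny $n$, or by computer for the rest) with no conceptual content. The secondary bookkeeping task is verifying that the threshold $N = 181$ really does make both branches of the decomposition valid for all $n > N$; this is the short case analysis sketched above. All of the substantive combinatorics has already been packaged into Theorem~\ref{preserve}.
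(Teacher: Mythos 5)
Your proposal is correct and follows essentially the same route as the paper: strong induction with a computer-checked finite base case, then a decomposition $n=9k'+11\ell'$ with $k'\geq 8$ and $2\leq \ell'\leq 10$ obtained from Lemma~\ref{postage} and fed into Theorem~\ref{preserve}. The only difference is bookkeeping: the paper applies Lemma~\ref{postage} to $n-9(8)-11(2)$ (base case up to $173$), while you apply it to $n$ directly and shift the representation when $\ell\in\{0,1\}$ (base case up to $181$); both versions of the arithmetic check out.
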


\begin{proof}
The proof is by strong induction on $n$.  For $n\leq 173$ we found a $\tfrac{3}{2}^+$-free circular $4$-ary word of length $n$ by computer search.

Assume that for some $n\geq 174$, there is a $\tfrac{3}{2}^+$-free circular $4$-ary word of every length $m<n.$  Then $n-9(8)-11(2)\geq 174-72-22=80,$ so by Lemma~\ref{postage}, we can write $n-9(8)-11(2)=9r+11s,$ or equivalently $n=9(r+8)+11(s+2)$, for integers $r\geq 0$ and $0\leq s \leq 8$.  Let $k=r+8$ and $\ell=s+2,$ and note that $k\geq 8$ and $2\leq \ell\leq 10.$  Clearly, $k+\ell<n$, so by the inductive hypothesis, there is a $\tfrac{3}{2}^+$-free circular $4$-ary word $(w)$ of length $k+\ell$.  Let $w=uv$, with $|u|=k$ and $|v|=\ell.$  By Theorem~\ref{preserve}, the circular $4$-ary word $(\f(u)\g(v))$ is also $\tfrac{3}{2}^+$-free, and has length $9k+11\ell=n$.  
\end{proof}

\section{$\CRTI(3)=\tfrac{7}{4}$}\label{3}

It follows immediately from the fact that $\RT(3)=\tfrac{7}{4}$ that $\CRTI(3)\geq\tfrac{7}{4}$.  Here, we demonstrate that there are $\tfrac{7}{4}^+$-free ternary words of every length $n\geq 23$, from which we conclude that $\CRTI(3)=\tfrac{7}{4}.$  The only lengths which do not admit a $\tfrac{7}{4}^+$-free circular ternary word are $5$, $7,$ $9,$ $10,$ $14$, $16,$ $17,$ and $22$.

We use a construction very similar to the one used in the previous section.  Let $\Sigma_3=\{\tt{0},\tt{1},\tt{2}\}.$  Define the morphism $\d:\Sigma_3^*\rightarrow \Sigma_3^*$ by
\begin{align*}
\tt{0}&\mapsto \tt{012021201\ 2102120210}\\
\tt{1}&\mapsto \tt{120102012\ 0210201021}\\
\tt{2}&\mapsto \tt{201210120\ 1021012102}
\end{align*}
and the morphism $\dt:\Sigma_3^*\rightarrow \Sigma_3^*$ by
\begin{align*}
\tt{0}&\mapsto \tt{012021201 \ 0201 \ 2102120210}\\
\tt{1}&\mapsto \tt{120102012 \ 1012 \ 0210201021}\\
\tt{2}&\mapsto \tt{201210120 \ 2120 \ 1021012102}
\end{align*}
Throughout this section, let $\D=\{\d,\dt\}.$  Note that $\d$ is the morphism used by Dejean~\cite{Dejean1972} to prove that $\RT(3)=\tfrac{7}{4}$.  In particular, Dejean proved that $\d$ is $\tfrac{7}{4}^+$-free.  The image $\dt(\tt{0})$ is obtained by inserting the factor $\tt{0201}$ into the middle of $\d(\tt{0})$ (indicated above by spacing), and for $a\in\{\tt{1},\tt{2}\}$, $\dt(a)$ is obtained from $\dt(\tt{0})$ by adding $a$ to each letter of $\dt(\tt{0})$ modulo $3$.  Note that $\d$ and $\dt$ have similar properties to $\f$ and $\g$ used in Section~\ref{4}:  
\begin{itemize}
\item Both $\d$ and $\dt$ have the cyclic structure described above for $\dt$.
\item For all $a\in\Sigma_3$, $\d(a)$ and $\dt(a)$ have the same prefix of length $9$, and the same suffix of length $13$.
\item $D(\Sigma_3)$ is a prefix code; the length $10$ prefixes of all building blocks of $D$ are distinct.
\item $D(\Sigma_3)$ is a suffix code; the length $14$ suffixes of all building blocks of $D$ are distinct.
\end{itemize}

\begin{lemma}
Let $u$ be a factor of some $D$-image.
\begin{enumerate}
\item If $|u|\geq 22$, then $u$ has a cut. \label{Dcut}
\item If $u$ has a cut, then $u$ has unique $D$-preimage.  \label{Dpre}
\end{enumerate}
\end{lemma}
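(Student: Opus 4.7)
The plan is to mimic the proofs of Lemmas~\ref{cut} and~\ref{preimage}, adapting them to $\D$, whose building blocks are longer and share more common structure than those of $\F$.

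For part~(a), I would let $P$ be the set of length-$9$ prefixes of building blocks of $\D$, namely $\{\tt{012021201},\tt{120102012},\tt{201210120}\}$ (shared between $\d(a)$ and $\dt(a)$ by the bullet list preceding the lemma), and $S$ the set of length-$14$ suffixes of building blocks of $\D$, which are six distinct words. By inspecting all $\D$-images of the $9$ ternary words of length $2$---long enough that every interior or cross-boundary occurrence of any factor of length at most $14$ is witnessed somewhere, since each building block has length at least $19$---I would verify that each element of $P$ occurs in a $\D$-image only at the start of a building block, and each element of $S$ only at the end. This places a cut immediately to the left of every $P$-occurrence and immediately to the right of every $S$-occurrence.

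Given this, assume $|u|\geq 22$. If $u$ lies inside a single building block, the block must have length $23$ (since $|u|>19$), and $u$ is either a length-$22$ prefix or a length-$22$ suffix of that block, hence contains its length-$9$ prefix or its length-$14$ suffix. If $u$ spans three or more blocks, an entire interior block sits inside $u$, yielding a length-$9$ prefix occurrence. If $u$ spans exactly two adjacent blocks, contributing $s\geq 1$ letters of the first and $p\geq 1$ letters of the second, then $s+p=|u|\geq 22$ forces $s\geq 14$ or $p\geq 9$, once again supplying an $S$- or $P$-occurrence.

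For part~(b), I would essentially reuse the proof of Lemma~\ref{preimage}: $\D(\Sigma_3)$ is a bifix code (prefix code by distinctness of length-$10$ prefixes, suffix code by distinctness of length-$14$ suffixes), so a cut lets us write $u=s\vert v\vert p$ with $v\in\D(\Sigma_3)^*$ uniquely decomposable; and since every $\d(a)$ and every $\dt(a)$ begins and ends in $a$, the last letter of $s$ and the first letter of $p$ determine their $\D$-preimages.

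The main obstacle is the combinatorial bookkeeping in part~(a): the common pattern $\tt{012}$ (up to cyclic relabelling) occurs many times inside a single building block, so ruling out every off-boundary occurrence of a length-$9$ prefix or a length-$14$ suffix is noticeably more delicate than the four-letter check of Lemma~\ref{cut}. It remains, however, a finite verification over the $36$ length-$2$ $\D$-images and poses no conceptual difficulty beyond the added bookkeeping.
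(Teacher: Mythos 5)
Your proposal is correct and follows essentially the same route as the paper: reduce to the arguments of Lemmas~\ref{cut} and~\ref{preimage}, using synchronizing prefixes and suffixes of the building blocks (verified by a finite check on $\D$-images of length-$2$ words) together with the length count for $|u|\geq 22$, and the bifix-code/first-and-last-letter argument for uniqueness of the $\D$-preimage. The only cosmetic difference is that you synchronize on the length-$14$ suffixes where the paper uses the (shared) length-$9$ suffixes; both choices make the arithmetic work and your suffix claim in fact follows from the paper's.
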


\begin{proof}
The proof of~\ref{Dcut} is similar to that of Lemma~\ref{cut}.  If $|u|\geq 22$ then $u$ contains the length $9$ prefix of some building block (and there is a cut to the left of this prefix) or the length $9$ suffix of some building block (and there is a cut to the right of this prefix).  The proof of~\ref{Dpre} is similar to that of Lemma~\ref{preimage}.
\end{proof}

\begin{theorem}\label{preserve3}
Let $k\geq 6$ and $2\leq \ell\leq 20$.  Let $u,v\in\Sigma_3^+$ be words of length $k$ and $\ell$, respectively.  If the circular word $\circ{uv}$ is $\tfrac{7}{4}^+$-free, then so is the circular word $\circ{\d(u)\dt(v)}.$
\end{theorem}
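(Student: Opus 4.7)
The plan is to mirror the proof of Theorem~\ref{preserve}, with the constants updated for the ternary setting: building blocks of $D$ now have length $19$ or $23$, the shared prefix length is $9$ (so \emph{characteristic} prefixes have length $10$), the shared suffix length is $13$ (so \emph{characteristic} suffixes have length $14$), and the exponent threshold $\tfrac{7}{4}$ turns the basic inequality into $|x|>3|y|$. Write $U=\d(u)$ and $V=\dt(v)$ and suppose for contradiction that some conjugate of $UV$ contains a factor $xyx$ with $|x|>3|y|$.

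Small $|x|$ is handled first by exhaustive search: if $|x|\leq 22$ then $|y|\leq 7$ and $|xyx|\leq 51$, so $xyx$ is a factor of some $D$-image of a $\tfrac{7}{4}^+$-free ternary word of length at most five, and a finite computer check rules this out. For $|x|\geq 23$, the cut portion of the preceding lemma forces $x$ to contain either the length-$10$ prefix or the length-$14$ suffix of some building block of $D$. Since $\d$- and $\dt$-blocks have distinct characteristic prefixes and suffixes, the boundary patterns $s_{14}(U)p_{10}(V)$ and $s_{14}(V)p_{10}(U)$ each occur in $(UV)$ only across the corresponding boundary, so as $x$ appears at least twice in a conjugate of $UV$, neither pattern can lie inside $x$. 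Hence if $x$ contains a $\d$-style characteristic prefix or suffix then $x$ lies inside $s_{13}(V)Up_9(V)$, and if a $\dt$-style one then inside $s_{13}(U)Vp_9(U)$. Each region splits according to whether $xyx$ stays local (Cases~1 and~3) or extends so that $V$ or $U$ is a factor of $y$ (Cases~2 and~4).

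In Cases~1 and~3 the region reassembles as $\d(v_\ell u v_1)$ or $\dt(u_k v u_1)$, using that the length-$9$ prefix and length-$13$ suffix of $V$ coincide with those of $\d(v_1)$ and $\d(v_\ell)$, and symmetrically for $U$; Dejean's linear $\tfrac{7}{4}^+$-freeness of $\d$~\cite{Dejean1972} together with an analogous statement for $\dt$ proved along the lines of Lemma~\ref{Ordinary}, combined with $k\geq 6$ and $\ell\geq 2$, then contradict $\tfrac{7}{4}^+$-freeness of $(uv)$. In Case~2 the middle block $m_x$ of the cut decomposition $x=s_x|m_x|p_x$ lies entirely in the $\d$-region, so $m_x$ is a concatenation of $\d$-blocks only, giving $|m_x|=19|x'|$ for its unique $D$-preimage $x'$; combined with $|m_y|\geq 19|y'|$, $|s_x|,|p_x|\leq 18$, and $|x|>3|y|$, the chain
\[
19|bx'a|\geq |x|>3|y|\geq 3|m_y|\geq 3\cdot 19|y'|
\]
yields $|bx'a|>3|y'|$ in Subcase~2a (where $bx'ay'bx'a$ is a genuine factor of $(uv)$), and the analogous $|x'a|>3|y'b|$ in Subcase~2b (where the preimage wraps within a single $\d$-block), each contradicting $\tfrac{7}{4}^+$-freeness of $(uv)$. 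Case~4 is a direct length comparison: $|y|\geq |U|-22=19k-22$ and $|x|\leq (|V|+22)/2=(23\ell+22)/2$, and the hypotheses $k\geq 6$, $\ell\leq 20$ give $|x|\leq 241<276\leq 3|y|$, violating $|x|>3|y|$.

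The main obstacle is the inequality bookkeeping in Subcase~2b: one must deduce $|x'a|>3|y'b|$ from $|x|>3|y|$, $|p_x|+|s_x|\leq 19$, and $|p_xs_y|,|p_ys_x|\geq 19$, now with the larger coefficient $3$ (rather than $1$) on the right-hand side; the slack in the $19$ versus the needed $(1+3)\cdot 19/(3+1)$ ends up giving the bound with room to spare. A routine numerical check confirms that $k\geq 6$ and $\ell\leq 20$ really suffice for the bounds in Cases~1,~3, and~4, and the base case $|x|\leq 22$ is cleared by a small computer search.
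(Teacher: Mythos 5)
Your proposal is correct and follows essentially the same route as the paper: the same reduction of small $|x|$ to a finite check, the same use of the boundary factors $s_{14}(U)p_{10}(V)$ and $s_{14}(V)p_{10}(U)$ to confine $x$ to $s_{13}(V)Up_9(V)$ or $s_{13}(U)Vp_9(U)$, the same four-case split, and the same Case 2 and Case 4 bookkeeping (your Subcase 2b inequality with the coefficient $3$ and your Case 4 numerics $241<276$ match what the paper does or leaves as ``analogous''). The only cosmetic deviation is Case 3, where you invoke a full Lemma~\ref{Ordinary}-style $\tfrac{7}{4}^+$-freeness statement for $\dt$ while the paper merely checks $\dt(w)$ for $\tfrac{7}{4}^+$-free $w$ with $|w|\leq 22$ (which suffices since $\ell\leq 20$); both are sound.
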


\begin{proof}
Let $u=u_1\dots u_k$ and $v=v_1\dots v_\ell$ and suppose that $(uv)$ is $\tfrac{7}{4}^+$-free.  Let $U=\d(u)$ and $V=\dt(v).$  Suppose towards a contradiction that $(UV)$ has a factor of exponent greater than $\tfrac{7}{4}.$  Then some conjugate of $UV$ has a factor $xyx$ with $|x|>3|y|.$  Suppose first that $|x|\leq 22$.  Then $|y|\leq 7$ and $|xyx|\leq 51.$  It follows that $xyx$ is a factor of some $\D$-image of the form $\d(w_1)\dt(w_2)\d(w_3)$, where $w_1,w_2,w_3\in\Sigma_3^*$ satisfy $|w_1|+|w_2|+|w_3|=4$ and $w_1w_2w_3$ is $\tfrac{7}{4}^+$-free. We eliminate this possibility by exhaustive search and may now assume that $|x|\geq 23.$

Notice that $x$ contains neither $s_{14}(U)p_{10}(V)$ nor $s_{14}(V)p_{10}(U)$ as a factor, since each of these factors appears at most once in any conjugate of $UV$.  It follows that $x$ appears only inside $s_{13}(V)Up_9(V)$, or only inside $s_{13}(U)Vp_9(U).$  We have four cases, as in Theorem~\ref{preserve}.

\medskip

\noindent
\textbf{Case 1:} $xyx$ is a factor of $s_{13}(V) U p_{9}(V)$.

\noindent
Then $xyx$ is a factor of $\d(v_\ell u v_1)$, and since $\d$ is $\tfrac{7}{4}^+$-free~\cite{Dejean1972}, we reach a contradiction as in Case 1 in the proof of Theorem~\ref{preserve}.

\medskip

\noindent
\textbf{Case 2:} $x$ is a factor of $s_{13}(V)Up_9(V)$ and $xyx$ has $V$ as a factor.

\noindent
The argument is analogous to that of Case 2 in the proof of Theorem~\ref{preserve}.

\medskip

\noindent
\textbf{Case 3:} $xyx$ is a factor of $s_{13}(U) Vp_{9}(U)$.

\noindent
Since $|v|=\ell\leq 20$, it suffices to check that $\dt(w)$ is $\tfrac{7}{4}^+$-free for all $\tfrac{7}{4}^+$-free words $w\in \Sigma^*$ with $|w|\leq 22.$

\medskip

\noindent
\textbf{Case 4:}  $x$ is a factor of $s_{13}(U)V p_9(U)$ and $xyx$ has $U$ as a factor.

\noindent
Write $U=p_9(U)U's_{13}(U).$  Then $y$ contains $U'$ as a factor, so
\[
|y|\geq |U'|=|U|-22=19k-22\geq 92
\]
from the assumption that $k\geq 6$.  On the other hand, since $x$ appears twice (without overlapping itself) in $s_{13}(U)Vp_9(U),$ we conclude that
\[
|x|\leq \frac{|V|+22}{2}=\frac{23\ell+22}{2}\leq 241,
\]
from the assumption that $\ell\leq 20$.  But then $3|y|>|x|,$ a contradiction.
\end{proof}

\begin{theorem}
For every $n\geq 23$, there is a $\tfrac{7}{4}^+$-free circular ternary word of length $n$.
\end{theorem}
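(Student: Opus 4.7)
The plan is to mirror the strong-induction argument used at the end of Section \ref{4}, substituting Theorem \ref{preserve3} for Theorem \ref{preserve}. Because $\gcd(19,23)=1$, Lemma \ref{postage} will, for every sufficiently large $n$, let us write $n=19k+23\ell$ with $k\geq 6$ and $2\leq\ell\leq 20$, matching exactly the hypotheses of Theorem \ref{preserve3}.

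Concretely, I will induct on $n$. For the inductive step, assume $n\geq 556$. Then $n-19\cdot 6-23\cdot 2=n-160\geq 396=(19-1)(23-1)$, so Lemma \ref{postage} applied to $n-160$ with $a=19$ and $b=23$ yields nonnegative integers $r$ and $s$ with $s<19$ such that $n-160=19r+23s$. Setting $k=r+6$ and $\ell=s+2$ gives $n=19k+23\ell$ with $k\geq 6$ and $2\leq\ell\leq 20$. Since $19\leq 23$ we have $23(k+\ell)\geq 19k+23\ell=n\geq 556$, so $k+\ell\geq 25$; and clearly $k+\ell<n$. Thus $k+\ell$ lies in the range of the inductive hypothesis, which supplies a $\tfrac{7}{4}^+$-free circular ternary word $(w)$ of length $k+\ell$. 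Writing $w=uv$ with $|u|=k$ and $|v|=\ell$, Theorem \ref{preserve3} then produces the $\tfrac{7}{4}^+$-free circular word $(\d(u)\dt(v))$ of length $19k+23\ell=n$.

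The base cases are the lengths $23\leq n\leq 555$, for each of which a $\tfrac{7}{4}^+$-free circular ternary word must be exhibited explicitly. The main obstacle is the scale of this base-case computer search: the postage-stamp threshold $(19-1)(23-1)=396$ and the minimum anchor $19\cdot 6+23\cdot 2=160$ are considerably larger than the corresponding numbers in Section \ref{4}, because $\d$ and $\dt$ are longer morphisms than $\f$ and $\g$. Each individual length is nevertheless routine to handle by backtracking over ternary candidates, pruning as soon as any factor of exponent exceeding $\tfrac{7}{4}$ arises and checking the wraparound for the circular condition.

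One subtle point that must be verified is that the inductive step never invokes a length in $\{5,7,9,10,14,16,17,22\}$ — the lengths for which no $\tfrac{7}{4}^+$-free circular ternary word exists — since otherwise the induction would collapse. The bound $k+\ell\geq 25$ derived above takes care of this uniformly for all $n\geq 556$, so the only remaining work is the (finite, but sizeable) base-case enumeration.
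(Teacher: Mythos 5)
Your proposal is correct and follows essentially the same route as the paper: computer search for the base lengths and strong induction using Lemma~\ref{postage} to write $n=19k+23\ell$ with $k\geq 6$, $2\leq\ell\leq 20$, then Theorem~\ref{preserve3}. In fact your choice to start the inductive step at $n\geq 556$ (so that $n-160\geq 396=(19-1)(23-1)$) is slightly more careful than the paper's $n\geq 555$, since $555-160=395$ is exactly the Frobenius number of $19$ and $23$ and is not representable; your bound $k+\ell\geq 25$ also correctly keeps the induction above the exceptional lengths.
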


\begin{proof}
The proof is by strong induction on $n$.  For $23\leq n< 555$ we found a $\tfrac{7}{4}^+$-free circular ternary word of length $n$ by computer search.

Assume that for some $n\geq 555$, there is a $\tfrac{7}{4}^+$-free circular ternary word of every length $m$ such that $23\leq m<n.$  Then $n-19(6)-23(2)\geq 555-114-46=395,$ so by Lemma~\ref{postage}, we can write $n-19(6)-23(2)=19r+23s,$ or equivalently $n=19(r+6)+23(s+2)$, for integers $r\geq 0$ and $0\leq s \leq 18$.  Let $k=r+6$ and $\ell=s+2,$ and note that $k\geq 6$ and $2\leq \ell\leq 20.$  Now since $555>23^2,$ we have $23\leq k+\ell<n$, so by the inductive hypothesis, there is a $\tfrac{7}{4}^+$-free circular ternary word $(w)$ of length $k+\ell$.  Let $w=uv$, with $|u|=k$ and $|v|=\ell.$  By Theorem~\ref{preserve3}, the circular ternary word $(\d(u)\dt(v))$ is also $\tfrac{7}{4}^+$-free, and has length $19k+23\ell=n$.  
\end{proof}

\section{$\CRTS(5)=\tfrac{4}{3}$}\label{5}

In this section, we prove that the strong circular repetition threshold for $5$ letters is $\tfrac{4}{3}$.  We use a method similar to the method used by Gorbunova \cite{Gorbunova2012} for larger alphabets.  Throughout this section, for a (finite or infinite) word $u=u_1u_2u_3\dots$, where the $u_k$ are letters, we define $u[i]=u_i$ and $u[i:j]=u_i\dots u_j$ for all positive integers $i$ and $j$ with $i\leq j$.  For a finite word $v=v_1v_2\dots v_n$, we define $v[-j]=v[n-j+1]=v_{n-j-1}$.  In other words, $v[-j]$ is the $j$th letter of $w$ counting from the right (and starting at $1$).  For positive integers $i$ and $j$ with $i\geq j$, we define $v[-i:-j]=v[n-i+1:n-j+1]=v_{n-i+1}\dots v_{n-j+1}$.  Finally, for positive integers $i$ and $j$ with $i+j\leq n+1$, we define $v[i:-j]=v_i\dots v_{n-j+1}$.  Throughout this section, words are always assumed to start at index $1$.

Define $\varphi:\{\tt{0},\tt{1}\}^*\rightarrow \{\tt{0},\tt{1}\}^*$ by $\varphi(\tt{0})=\tt{101101}$ and $\varphi(\tt{1})=\tt{10}.$  Define 
\[
\p=\varphi^\infty(\tt{1})=\tt{10\ 101101\ 10\ 101101\ 10\ 10\ 101101\ 10\ 10\ 101101}...
\]

Throughout this section, let $A=\{\tt{a},\tt{b},\tt{c},\tt{d}\}$ and let $s=\tt{abc}.$  For a (finite or infinite) binary word $w$, define
\[
M(w)[i]=\begin{cases}
s[i] \mbox{ if } i\leq 3;\\
M(w)[i-3] \mbox{ if } i>3 \mbox{ and } w[i]=\tt{0};\\
\mbox{the unique element of } \\
A\backslash\{M(w)[i-1],M(w)[i-2],M(w)[i-3]\} \mbox{ otherwise.}
\end{cases}
\]
This definition originates with Pansiot~\cite{Pansiot1984}, though we use different terminology.  The word $w$ is called the \emph{encoding} of $M(w)$.  Note that the encoding $w$ can be recovered from $M(w)$, i.e.\ $M$ is invertible.  Throughout this section, let $\pp=M(\p)$.  The following results are due to Pansiot~\cite{Pansiot1984}.

\begin{result}\label{Pansiot}
\begin{enumerate}
\item \label{TwoZeros} The word $\tt{00}$ is not a factor of $\p$.
\item \label{SevenFifths} The word $\pp$ is $\tfrac{7}{5}^+$-free.
\item \label{BadFactors} The only factors of $\pp$ with exponent greater than $\tfrac{4}{3}$ are encoded by 
\[
t_1=\tt{10110101101}.
\]
This factor of $\p$ gives rise to $\tfrac{7}{5}$-powers in $\pp$ equal to
\[
T_1=M(t_1)=\tt{abcd \ bacbdc \ abcd}
\]
up to permutation of $A$.  
\item \label{AllPermutations} If a finite word $u$ appears as a factor of $\pp,$ then so does $\sigma(u)$, where $\sigma$ is any permutation of $A.$
\end{enumerate}
\end{result}

We also require one further result which can be derived from Pansiot's work fairly easily.  To state the result, we require some additional terminology related to the generalized repetition threshold~\cite{IlieOchemShallit2005}.  For a rational number $\beta$ such that $1<\beta\leq 2$, a $\beta$-power $u$ has \emph{period} $p$ if we can write $u=xx'$, where $x'$ is a prefix of $x$, $|x|=p$, and $|u|=\beta|x|$; in this case, we call $u$ a \emph{$(\beta,p)$-power}, and we call $x'$ the \emph{excess} of the $\beta$-power.  A word is called \emph{$(\beta^+,p)$-free} if it contains no factor that is a $(\beta',p')$-power for $\beta'> \beta$ and $p'\geq p$.

\begin{lemma}\label{PansiotImprovement}
The word $\pp$ is $(\tfrac{14}{11}^+,11)$-free.
\end{lemma}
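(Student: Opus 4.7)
The plan is to suppose for contradiction that $\pp$ has a factor $u$ with period $p\ge 11$ and exponent $\beta > 14/11$, and to derive a contradiction via the Pansiot coding and the self-similarity $\p=\varphi(\p)$.

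By Result~\ref{Pansiot}, parts~(\ref{SevenFifths}) and~(\ref{BadFactors}), $\beta \le 7/5$, and if $\beta > 4/3$ then $u$ is (up to alphabet permutation) a conjugate of $T_1$, whose minimal period is $10$---contradicting $p\ge 11$. So $14/11 < \beta \le 4/3$, i.e., $14p/11 < |u| \le 4p/3$. At $p=11$ the interval $(14,\,44/3]$ contains no integer, so this case is immediate, and we may assume $p\ge 12$. Writing $u=xyx$ with $|xy|=p$, the excess $|x|=|u|-p$ is then at least $4$.

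Next I would translate the repetition to $\p$. An easy induction shows that three consecutive letters of $\pp$ are always pairwise distinct, so any length-$|x|$ factor of $\pp$ with $|x|\ge 4$ is determined by its first three letters together with a unique length-$(|x|-3)$ window of $\p$ coding the remaining transitions. The two copies of $x$ in $u$ therefore force $\p[i\colon i+|x|-4]=\p[i+p\colon i+p+|x|-4]$, so $\p$ contains a factor of length $p+|x|-3$ with period $p$ and exponent $1+(|x|-3)/p > 14/11-3/p$.

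The main step is to rule out such a $\p$-repetition by the same kind of desubstitution that underlies Pansiot's proof of Result~\ref{Pansiot}(\ref{BadFactors}). Every sufficiently long factor of $\p=\varphi(\p)$ admits an essentially unique factorisation into the $\varphi$-blocks $\tt{10}$ and $\tt{101101}$ up to boundary pieces; a period $p$ of such a factor forces the two aligned windows (the encodings of the two copies of $x$) to synchronise with the $\varphi$-block boundaries, yielding a strictly shorter factor $v'$ of $\p$ with an inherited period $p'$ and an exponent still satisfying a forbidden inequality of the same form. Iterating the descent reduces matters to a bounded finite family of $\p$-factors which one inspects directly against Pansiot's classification. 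The principal obstacle is this desubstitution: because $\varphi$ is non-uniform ($|\varphi(\tt{0})|=6$ and $|\varphi(\tt{1})|=2$), the descended period is not simply the ambient period divided by a fixed constant, so one must carry out a case analysis on the $\tt{0}/\tt{1}$-composition of a would-be period, exactly as in Pansiot's original argument---only with the threshold $14/11$ in place of $4/3$.
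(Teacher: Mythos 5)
Your overall route---dispose of small excess and of exponents above $\tfrac{4}{3}$ first, then pass to the binary encoding $\p$ and redo Pansiot's desubstitution analysis with the threshold $\tfrac{14}{11}$---is in outline the same as the paper's, and your preliminary reductions (Result~\ref{Pansiot} plus the integrality argument at $p=11$, giving excess at least $4$) are fine. The gap is in your ``main step''. When you translate the repetition $xyx$ of $\pp$ into $\p$, you retain only the fact that $\p$ has a factor of period $p$ and excess $|x|-3$, and that is far too weak. For $p=12$ and $|x|=4$ (a would-be $\tfrac{4}{3}$-power of period $12$, exactly the kind of factor the lemma must exclude) the excess in $\p$ is $1$: a factor of length $13$ with period $12$. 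Such factors certainly occur in $\p$, they force no synchronisation whatsoever with the $\varphi$-block boundaries, and so the descent you describe never gets started and nothing is ``ruled out''. What makes Pansiot's argument work is precisely the condition you drop in the translation: because the excess of the $\pp$-repetition has length at least $3$, the $3$-letter state repeats after $p$ steps, and since $S_4$ acts simply transitively on ordered triples of distinct letters, the permutation of $A$ induced by the length-$p$ period block of $\p$ must be the identity. It is this kernel/inextensibility condition, not the bare period of a $\p$-factor, that is preserved under desubstitution and drives the case analysis in Pansiot's Propri\'et\'es 4.9--4.12, which the paper invokes.

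Second, the only place where the specific bound $\tfrac{14}{11}$ actually enters is the terminal finite verification, which you leave unspecified (``a bounded finite family \dots\ which one inspects directly against Pansiot's classification''). Pansiot's published classification is calibrated to the Dejean threshold, so it cannot simply be quoted with $\tfrac{14}{11}$ substituted; a new check is needed. The paper's proof pins it down: with $\mu(w)=\varphi(w)\tt{101}$, one verifies that $\mu(uvu)$ and $\mu^2(uvu)$ have exponent below $\tfrac{14}{11}$ for the two seeds $u=\varepsilon$, $v=\tt{101101101}$ and $u=\tt{1}$, $v=\tt{011010110}$, and concludes that the only inextensible repetitions of $\pp$ with exponent greater than $\tfrac{14}{11}$ and excess at least $3$ are $M(\tt{101101101})$, a $\tfrac{4}{3}$-power of period $9$, and $M(\tt{10110101101})$, the $\tfrac{7}{5}$-power of period $10$---both of period less than $11$, which is what yields the lemma. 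Until you state the kernel condition carried by the $\pp$-repetition and specify (and perform) this finite check, the proposal is a plan rather than a proof.
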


\begin{proof}
First of all, if $\pp$ has a $(\beta,p)$-power with excess of length at most $2$, and if $p\geq 11$, then $\beta\leq \tfrac{13}{11}$.  So we need only consider powers with excess at least $3$.  The proof is analogous to that of~\cite[Propri\'et\'e 4.14]{Pansiot1984}, so we omit some details.  Define $\mu:\{0,1\}^*\rightarrow \{0,1\}^*$ by $\mu(w)=\varphi(w)\tt{101},$ as in~\cite{Pansiot1984}.  By~\cite[Propri\'et\'e 4.9, 4.10, and 4.12]{Pansiot1984}, it suffices to check that $\mu(uvu)$ and $\mu^2(uvu)$ have exponent less than $\tfrac{14}{11}$ for $u=\varepsilon$ and $v=\tt{101101101}$, and $u=\tt{1}$ and $v=\tt{011010110}$.  This allows us to conclude that the only \emph{inextensible} repetitions of $\pp$ (see~\cite{Pansiot1984} for the precise definition) with exponent greater than $\tfrac{14}{11}$ and excess at least $3$ are
\begin{align*}
&M(\tt{101101101})=\tt{abc\ dbacbd\ abc}, \mbox{ and}\\ 
&M(\tt{10110101101})=\tt{abcd\ bacbdc\ abcd},
\end{align*}
up to permutation of $A$.  These are a $\tfrac{4}{3}$-power of period $9$ and a $\tfrac{7}{5}$-power of period $10$, respectively.  It follows that any factor with exponent greater than $\tfrac{14}{11}$ has period less than $11$.
\end{proof}

We note that the factor 
\begin{center}
\begin{tikzpicture}
\draw (0,0) node {$t_2=\tt{1011010110110101101}$};
\draw [decorate,decoration={brace,amplitude=4pt}]
(-1.4,0.2) -- (0.65,0.2) node [above,black,midway,yshift=5pt]{$t_1$};
\draw [decorate,decoration={brace,amplitude=4pt,mirror}]
(0.1,-0.2) -- (2.15,-0.2) node [below,black,midway,yshift=-5pt]{$t_1$};
\end{tikzpicture}
\end{center}
also appears in $\p$, which encodes the factor
\begin{align}
T_2=M(t_2)=\tt{\overline{abcd}bacb\underline{dc\overline{ab}}\overline{cd}acba\underline{dcab}}\label{Overlap}
\end{align}
up to permutation of the letters (the overlining and underlining is to emphasize the two $\tfrac{7}{5}$-powers).  However, by exhaustive search, this is the only way that two appearances of $t_1$ in $\p$ can overlap (and no appearance of $t_2$ overlaps with a third appearance of $t_1$).  Further, if $t_1vt_1$ is a factor of $\p$, then $|v|\geq 17$.  It follows that any two nonoverlapping $\tfrac{7}{5}$-powers of $\pp$ have a factor of length at least $14$ between them.

We first describe a systematic way of eliminating the $\tfrac{7}{5}$-powers in a finite factor of $\pp$ by introducing a fifth letter $\tt{e}$.  The key is to note that the excess of every $\tfrac{7}{5}$-power in $\pp$ contains every letter from $A$ exactly once.  So we can eliminate the $\tfrac{7}{5}$-powers, and leave repetitions of exponent at most $\tfrac{4}{3}$, by changing the $\tt{d}$ in either the length $4$ prefix or the length $4$ suffix (but not both) of every $\tfrac{7}{5}$-power to an $\tt{e}$.  The algorithm given in Figure~\ref{EtaAlg} accomplishes this, and the output word also satisfies an additional structural property.

\begin{figure}
\begin{lstlisting}
input: $w$, a finite factor of $\pp$ of length $n$
output: $\eta(w)$, a $\tfrac{4}{3}^+$-free word of length $n$ on $A\cup\{\tt{e}\}$ 

for $i$ from $1$ to $n-13$:
   if $w[i:i+3]=w[i+10:i+13]$, then
      if $i\leq n-21$ and $w[i+8:i+11]=w[i+18:i+21]$, then
        if $w[i]=\tt{d}$ or $w[i+1]=\tt{d}$, then
          change the appearance of $\tt{d}$ in $w[i+10:i+11]$ to $\tt{e}$  
        else
          change the appearance of $\tt{d}$ in $w[i:i+3]$ to $\tt{e}$, and
          change the appearance of $\tt{d}$ in $w[i+18:i+21]$ to $\tt{e}$
      else
        if $i< 19$, then
          change the appearance of $\tt{d}$ in $w[i+10:i+13]$ to $\tt{e}$
        if $i\geq 19$, then
          change the appearance of $\tt{d}$ in $w[i:i+3]$ to $\tt{e}$
         
return $w$
\end{lstlisting}
\caption{The defining algorithm for $\eta(w)$.}\label{EtaAlg}
\end{figure}

Essentially, we search through the factors of $w$ of length $14$ for the $\tfrac{7}{5}$-powers.  When we find a $\tfrac{7}{5}$-power $u$, we first check whether it overlaps with another $\tfrac{7}{5}$-power $v$, in which case the factor of $w$ at hand is equal to $T_2$ (see~(\ref{Overlap})) up to permutation of $A$.  If $\tt{d}$ is the first or second letter of $u$, then we change the appearance of $\tt{d}$ that lies in the length $4$ suffix of $u$ (and also the length $4$ prefix of $v$) to an $\tt{e}$.  This one change eliminates both of the overlapping $\tfrac{7}{5}$-powers.  Otherwise, if $\tt{d}$ is the third or fourth letter of $u$, we change the appearance of $\tt{d}$ in the length $4$ prefix of $u$ to an $\tt{e}$ and the appearance of $\tt{d}$ in the length $4$ suffix of $v$ to an $\tt{e}$.  On the other hand, if $u$ does not overlap with another $\tfrac{7}{5}$-power, we consider where $u$ occurs in $w$.  If the $\tfrac{7}{5}$-power at hand starts in the first eighteen letters of $w$, then we replace the appearance of $\tt{d}$ in the length $4$ suffix with an $\tt{e}$.  Otherwise, we replace the appearance of $\tt{d}$ in the length $4$ prefix with an $\tt{e}.$

For a given factor $w$ of $\pp$, the word $\eta(w)$ is not only $\tfrac{4}{3}^+$-free, but any two appearances of $\tt{e}$ in $\eta(w)$ are relatively far apart.  The algorithm is also structured so that the letter $\tt{e}$ does not appear too close to the beginning nor the end of $\eta(w)$ very often; this simply reduces the number of exceptional cases that we need to deal with later.

\begin{lemma}\label{weproperties}
Let $w$ be a factor of $\pp$.  Then the following hold:
\begin{enumerate}
\item \label{FourThirds} $\eta(w)$ is $\tfrac{4}{3}^+$-free.
\item \label{Frequency} Every factor of $\eta(w)$ of length $15$ contains at least two $\tt{d}$'s and at most one $\tt{e}$.
\end{enumerate}
\end{lemma}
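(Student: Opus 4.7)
I argue by contrapositive: a $\tfrac{4}{3}^+$-power in $\eta(w)$ must arise from a $\tfrac{4}{3}^+$-power in $w$ that the algorithm failed to destroy. By construction, $\eta(w)$ differs from $w$ only by replacing some $\tt{d}$'s with $\tt{e}$'s. Hence, if $u$ is a $\tfrac{4}{3}^+$-power of period $p$ in $\eta(w)$, the equations $u[i] = u[i+p]$ force that at any pair of positions matched by the period, if one side is $\tt{e}$, so is the other. Reverting every $\tt{e}$ in $u$ back to $\tt{d}$ preserves every periodicity equality (and affects no other constraint), yielding a $\tfrac{4}{3}^+$-power at the same position of $w$ with the same period. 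By Result~\ref{Pansiot}(\ref{BadFactors}) this must be an occurrence of $T_1$ (a $\tfrac{7}{5}$-power of period $10$ and length $14$) up to permutation of $A$. It then suffices to verify that the algorithm breaks the period-$10$ matching in every such occurrence: in each $T_1$ the length-$4$ prefix and length-$4$ suffix are equal, and each contains a unique $\tt{d}$, at positions differing by exactly $10$ (so paired under the period). In the non-overlap case the algorithm replaces exactly one of these two paired $\tt{d}$'s by $\tt{e}$, destroying the periodicity. In the overlap case, a short case analysis distinguishing whether $\tt{d} \in \{w[i], w[i+1]\}$ shows that either a single $\tt{e}$ placed in the overlap region (which is simultaneously in the first $T_1$'s suffix and the second $T_1$'s prefix), or two $\tt{e}$'s placed at the extremes, destroys both overlapping $T_1$'s.

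\textbf{Part (b).} For the ``at most one $\tt{e}$'' claim I will show that any two $\tt{e}$'s introduced by the algorithm lie at distance at least $15$. Since any two nonoverlapping $\tfrac{7}{5}$-powers of $\pp$ are separated by a factor of length at least $14$ (as noted just before Figure~\ref{EtaAlg}), two nonoverlapping $T_1$'s start at positions differing by at least $28$. Combining this with the algorithm's placement rules---suffix position $[i+10,i+13]$ if $i < 19$, prefix position $[i,i+3]$ if $i \geq 19$, and the positions $[i,i+3]$, $[i+18,i+21]$, or $[i+10,i+11]$ inside a $T_2$-overlap---a finite case analysis covering (i) two consecutive nonoverlap $T_1$'s, (ii) the pair of $\tt{e}$'s inside a single $T_2$-overlap (at distance at least $15$ by construction), and (iii) mixed overlap/nonoverlap configurations verifies the $15$-distance bound in every case. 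For the ``at least two $\tt{d}$'s'' claim, it then suffices to show that every factor of $\pp$ of length $15$ contains at least three $\tt{d}$'s. I will derive this from the sharper statement that no five consecutive letters of $\pp$ are all non-$\tt{d}$: if $\pp[p+1], \ldots, \pp[p+5]$ were all non-$\tt{d}$ with $\pp[p] = \tt{d}$, then since $\pp$ has no square (being $\tfrac{7}{5}^+$-free) $\pp[p+1] \neq \pp[p+2]$; for $\pp[p+3]$ to avoid $\tt{d}$ it must be a ``new letter'' in the sense of $M$ (a ``repeat'' would give $\pp[p+3] = \pp[p] = \tt{d}$), forcing $\{\pp[p+1], \pp[p+2], \pp[p+3]\} = \{\tt{a},\tt{b},\tt{c}\}$; then $\pp[p+4]$ and $\pp[p+5]$ must both be ``repeats'' of $M$ (otherwise they would equal the unique missing letter $\tt{d}$), which forces two consecutive $\tt{0}$'s in $\p$ and contradicts Result~\ref{Pansiot}(\ref{TwoZeros}). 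The boundary case (run starting at position $1$) is handled by direct inspection of $\pp[1\!:\!4] = \tt{abcd}$. A routine counting argument then gives at least three $\tt{d}$'s per $15$-window, and subtracting the at most one $\tt{e}$ gives the claim.

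The main obstacle will be the case bookkeeping---in particular, the overlap analysis in part (a) and the spacing case analysis for $\tt{e}$'s in part (b)---though each individual subcase is short and essentially computational.
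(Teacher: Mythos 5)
Your proposal is correct and follows essentially the same route as the paper: for part (a) you pull any $\tfrac{4}{3}^+$-power of $\eta(w)$ back to $w$ via the letter-to-letter map $\tt{e}\mapsto\tt{d}$, reduce to occurrences of $T_1$ by Result~\ref{Pansiot}\ref{BadFactors}, and check that the algorithm breaks the period-$10$ matching (including the $T_2$-overlap case), and for part (b) you show any two $\tt{e}$'s are at distance at least $15$ and combine this with the density of $\tt{d}$'s in $\pp$, exactly as the paper does. The only deviation is that you prove ``every length-$15$ factor of $\pp$ contains at least three $\tt{d}$'s'' by hand from the encoding (no $\tt{00}$ in $\p$, no squares) rather than by the paper's exhaustive check---a perfectly good substitute; just note that with the paper's stated definition of $M$ the boundary inspection is of $\pp[1\!:\!5]=\tt{abcad}$ (the first $\tt{d}$ occurs at position $5$), which still settles the initial-run case.
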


\begin{proof}
First we prove~\ref{FourThirds}.  We can recover $w$ from $\eta(w)$ by changing all $\tt{e}$'s to $\tt{d}$'s, so every $\beta$-power in $\eta(w)$ gives rise to a $\beta$-power in $w$.  Thus, by Result~\ref{Pansiot}\ref{SevenFifths}, $\eta(w)$ is $\tfrac{7}{5}^+$-free.  Further, Result~\ref{Pansiot}\ref{BadFactors} describes the factors with exponent greater than $\tfrac{4}{3}$ in $w$.  We change either the prefix of length $4$ or the suffix of length $4$ (but not both) of every $\tfrac{7}{5}$-power in $w$ when we construct $\eta(w)$, so we conclude that $\eta(w)$ is $\tfrac{4}{3}^+$-free.

For~\ref{Frequency}, note first that every factor of length $15$ in $\pp$ contains at least three $\tt{d}$'s (this is by exhaustive check).  It suffices to show that there is a factor of length at least $14$ between any two appearances of $\tt{e}$ in $\eta(w)$.  When constructing $\eta(w)$ from $w$, we only change an appearance of $\tt{d}$ to $\tt{e}$ if it lies inside of a $\tfrac{7}{5}$-power in $w$.  Moreover, we only change a single $\tt{d}$ to an $\tt{e}$ in each $\tfrac{7}{5}$-power.  Take any two distinct appearances of $\tt{e}$ in $\eta(w)$.  If they arise due to overlapping appearances of $\tfrac{7}{5}$-powers in $w$, then the factor of $w$ we are dealing with is equal to $\sigma(T_2)$ for some permutation $\sigma$ of $A$.  Since we are concerned with two distinct appearances of $\tt{e}$, we must have changed some letter in the length $4$ prefix of $\sigma(T_2)$ and the length $4$ suffix of $\sigma(T_2)$.  This leaves a factor of length at least $14$ between the two $\tt{e}$'s.  Otherwise, the two appearances of $\tt{e}$ arise due to nonoverlapping appearances of $\tfrac{7}{5}$-powers in $w$.  But by exhaustive check, there is a factor of length at least $14$ between any pair of nonoverlapping appearances of $\tfrac{7}{5}$-powers in $w$.
\end{proof}

Our constructions of $\tfrac{4}{3}^+$-free circular words on five letters will require factors of $\pp$ that begin and end in the same letter.  The next lemma concerns the existence of such factors.

\begin{lemma}\label{Bookend}
For every $n\geq 3$, $\pp$ either has a factor of length $n$ that begins and ends in $\tt{d}$, or a factor of length $n+1$ that begins and ends in $\tt{d}$.
\end{lemma}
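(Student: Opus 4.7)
My plan is to analyze the set $P=\{p_1<p_2<\cdots\}$ of positions in $\pp$ at which $\tt{d}$ occurs. Since any factor of $\pp$ that begins and ends with $\tt{d}$ equals $\pp[p_i:p_j]$ for some $i\leq j$ and has length $p_j-p_i+1$, the lemma reduces to showing that for every $n\geq 3$, the difference set $D=\{p_j-p_i:i\leq j\}$ meets $\{n-1,n\}$.

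First I would pin down the gaps $g_k=p_{k+1}-p_k$. The proof of Lemma~\ref{weproperties}\ref{Frequency} already forces $g_k\leq 13$, and a direct case check on the local window $\pp[i-3:i]$ around each $\tt{d}$-position---using the fixed-point relation $\varphi(\p)=\p$ together with Result~\ref{Pansiot}\ref{TwoZeros} (no $\tt{00}$ in $\p$)---refines this to a tight small set, which a short computation suggests is $\{3,4,5\}$.

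Next I would verify the lemma for all $n\leq N_0$ (for some explicit $N_0$) by computing a long enough prefix of $\pp$ and tabulating the distances $p_j-p_i$ that actually occur there. This handles all the exceptional small values and pins down which short sums of consecutive gaps are realized.

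For $n>N_0$ I would exploit the $\varphi$-self-similarity: since $(g_k)$ is itself a morphic sequence, it is uniformly recurrent, so every short configuration of consecutive gaps that appears at all reappears throughout $\pp$. Combining this with the fact that the gap set is small, a finite check on short gap-patterns shows that for every large $n$, some block $g_i+g_{i+1}+\cdots+g_j$ lies in $\{n-1,n\}$. The main obstacle is this last combinatorial step: once both the small-$n$ cases and the gap set are under control, it reduces to a finite case analysis on how short consecutive gap-blocks can be concatenated, consistent with the gap sequence, to land in any prescribed pair $\{n-1,n\}$ of large integers.
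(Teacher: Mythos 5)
Your reduction to the difference set of $\tt{d}$-positions is faithful to the statement, and your gap claim is correct---in fact it needs no computation: since $\pp$ is $\tfrac{7}{5}^+$-free it has no factor of the form $\alpha\alpha$, $\alpha\beta\alpha$, or $\alpha\beta\gamma\alpha\beta$, so every factor of length $5$ contains all four letters, and consecutive occurrences of $\tt{d}$ are at distance $3$, $4$, or $5$. The genuine gap is the final step, which is the entire content of the lemma. From ``gaps lie in $\{3,4,5\}$, all short gap-patterns recur, and small $n$ are checked'' one cannot conclude that for every large $n$ some block of consecutive gaps sums to $n-1$ or $n$. From any fixed starting $\tt{d}$, block sums advance by $3$--$5$, so the largest sum $\leq n$ may undershoot $n$ by up to $4$; repairing a deficiency of $2$, $3$, or $4$ requires trading gaps at \emph{both} ends of a block whose ends are roughly $n$ apart, and whether a good trade exists depends on the joint configuration of the gap sequence at two places separated by about $n$---and this must work for every $n$ simultaneously. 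Uniform recurrence of short factors gives no control over such long-range alignments: nothing you state rules out that for some particular large $n$ every window realizes deficiency $2$--$4$. (Compare the degenerate situation in which all gaps equal $3$: then every difference is a multiple of $3$ and infinitely many pairs $\{n-1,n\}$ are missed; so the conclusion is sensitive to the precise arrangement of the $4$'s and $5$'s at scale $n$, which a finite check on short patterns does not engage with.) The sentence ``a finite case analysis on how short consecutive gap-blocks can be concatenated'' is therefore a hope, not an argument---exactly the step you flag as the main obstacle. The side claims that the gap sequence is morphic and uniformly recurrent also need justification, though those are routine.

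For comparison, the paper never needs this global difference-set property. By Result~\ref{Pansiot}\ref{AllPermutations} it suffices to find a factor of length $n$ or $n+1$ of $\pp$ beginning and ending in the \emph{same} letter; the binary analogue for $\p$ is immediate from the absence of $\tt{00}$, and the transfer to $\pp=M(\p)$ is done by comparing two windows of length about $n$: they cannot agree throughout (that would produce a square, contradicting Result~\ref{Pansiot}\ref{SevenFifths}), and at the first disagreement Pansiot's encoding forces a letter to recur at distance exactly $n$ or $n+1$. If you want to salvage your route, you need an analogous global mechanism---for instance an argument through the morphism $\varphi$ relating achievable $\tt{d}$-to-$\tt{d}$ distances at scale $n$ to smaller scales---rather than recurrence of short gap-patterns.
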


\begin{proof}
Let $n\geq 3$.  First we demonstrate that $\p$ has a factor of length $n$ which begins and ends in $\tt{1}$.  Recall that $\p=\tt{10101101\dots}$, so in particular $\p[5]=\p[6]=\tt{1}.$  By Result~\ref{Pansiot}\ref{TwoZeros}, either $\p[n+4]=\tt{1}$ or $\p[n+5]=\tt{1}$, and hence at least one of the length $n$ factors $\p[5:n+4]$ or $\p[6:n+5]$ begins and ends in $\tt{1}.$

By Result~\ref{Pansiot}\ref{AllPermutations}, it suffices to show that $\pp$ either has a factor of length $n$ that begins and ends in the same letter, or a factor of length $n+1$ that begins and ends in the same letter.  Let $\q$ be a suffix of $\p$ that satisfies $\q[1]=\q[n]=\tt{1}$, and consider the word $\mathbf{Q}=M(\q)$ (note that $\mathbf{Q}$ is a suffix of $\pp$ under a permutation of $A$, but by Result~\ref{Pansiot}\ref{AllPermutations}, every finite factor of $\mathbf{Q}$ is a factor of $\pp$).  Since $\q[1]=\q[n]=\tt{1}$, we see that $\mathbf{Q}[1:4]=\tt{abcd}$, and $\mathbf{Q}[n:n+3]$ contains all $4$ letters of $A$.  If $\mathbf{Q}[n]=\tt{a}$, then $\mathbf{Q}[1:n]$ begins and ends with $\tt{a},$ and we are done.  So we may assume that $\mathbf{Q}[n]\neq \tt{a}$.  If $\mathbf{Q}[n+1]=\tt{a}$, then $\mathbf{Q}[1:n+1]$ begins and ends in $\tt{a}$, and if $\mathbf{Q}[n+1]=\tt{b}$, then $\mathbf{Q}[2:n+1]$ begins and ends in $\tt{b}.$  So we may assume that $\mathbf{Q}[n+1]\in\{\tt{c},\tt{d}\}.$  By similar arguments, we may assume that $\mathbf{Q}[n+2]\in\{\tt{a},\tt{d}\}$ and $\mathbf{Q}[n+3]\in\{\tt{a},\tt{b}\}.$  By inspection, there are only three possibilities for $\mathbf{Q}[n:n+3]$:
\[
\tt{bcda},\ \tt{cdab}, \mbox{ and } \tt{dcab}.
\]

\bigskip

\noindent
\textbf{Case I:} $\mathbf{Q}[n:n+3]=\tt{bcda}$ (see Table~\ref{CaseOneTable})

\medskip

\begin{table}
\centering{
\begin{tabular}{c c c c c c}
\midrule
 & $\qq[k-3]$ & $\qq[k-2]$ & $\qq[k-1]$ & $\qq[k]$ & \\ 
& $\tt{\alpha}$ & $\tt{\beta}$ & $\tt{\gamma}$ \\\midrule

 $\qq[n+k-5]$ & $\qq[n+k-4]$ & $\qq[n+k-3]$ & $\qq[n+k-2]$ & $\qq[n+k-1]$ & $\qq[n+k]$\\
 $\tt{\alpha}$ & $\tt{\beta}$ & $\tt{\gamma}$ & &\\\midrule
\end{tabular}
}
\caption{Visual aid for Case I of the proof of Lemma~\ref{Bookend}.  Columns show first and last letter of factors of length $n$ in $\qq$.}
\label{CaseOneTable}
\end{table}

\noindent
Note that $\mathbf{Q}[n:n+2]=\tt{bcd}=\mathbf{Q}[2:4].$  However, $\qq[2:n-1]$ and $\qq[n:2n-3]$ must differ at some point, as otherwise $\qq[2:2n-3]$ is a square, contradicting Result~\ref{Pansiot}\ref{SevenFifths}.  Let $k$ be the smallest number greater than $2$ such that $\qq[k]\neq \qq[n+k-2].$  Let $\qq[k-3:k-1]=\qq[n+k-5:n+k-3]=\tt{\tt{\alpha\beta\gamma}}$, where $\tt{\alpha},$ $\tt{\beta},$ $\tt{\gamma},$ and $\tt{\delta}$ are the images of $\tt{a},$ $\tt{b},$ $\tt{c},$ and $\tt{d}$, respectively, under some permutation of $A$.  If $\qq[k]=\tt{\delta}$ and $\qq[n+k-2]=\tt{\alpha},$ then it follows that $\qq[n+k-1]=\tt{\delta}.$  But then $\qq[k:n+k-1]$ has length $n$ and begins and ends with $\tt{\delta}.$  On the other hand, if $\qq[k]=\tt{\alpha}$ and $\qq[n+k-2]=\tt{\delta},$ then either $\qq[n+k-1]=\tt{\alpha}$, or $\qq[n+k]=\tt{\alpha}$.  But then $\qq[k:n+k-1]$, or $\qq[k:n+k]$, respectively, begins and ends with $\tt{\alpha}$.

\bigskip

\noindent
\textbf{Case II:} $\mathbf{Q}[n:n+3]\in\{\tt{cdab},\tt{dcab}\}$ (see Table~\ref{CaseTwoTable})

\medskip

\begin{table}
\centering{
\begin{tabular}{c c c c c c}
\midrule
$\qq[k-3]$ & $\qq[k-2]$ & $\qq[k-1]$ & $\qq[k]$ & $\qq[k+1]$ & $\qq[k+2]$ \\ 
 $\tt{\alpha}$ & $\tt{\beta}$ & $\tt{\gamma}$ \\\midrule

& & $\qq[n+k-2]$ & $\qq[n+k-1]$ & $\qq[n+k]$ & $\qq[n+k+1]$\\
 & & $\tt{\alpha}$ & $\tt{\beta}$ & $\tt{\gamma}$ & \\\midrule
\end{tabular}
}
\caption{Visual aid for Case II of the proof of Lemma~\ref{Bookend}.  Columns show first and last letter of factors of length $n$ in $\qq$.}
\label{CaseTwoTable}
\end{table}

\noindent
If $\qq[n+4]=\tt{d},$ then $\qq[4:n+4]$ begins and ends in $\tt{d},$ and we are done, so we may assume that $\qq[n+4]=\tt{c}.$  Then $\mathbf{Q}[1:3]=\tt{abc}=\mathbf{Q}[n+2:n+4]$.  However, $\qq[1:n+1]$ and $\qq[n+2:2n+2]$ must differ at some point, as otherwise $\qq[1:2n+2]$ is a square, contradicting Result~\ref{Pansiot}\ref{SevenFifths}.  Let $k$ be the smallest number such that $\qq[k]\neq \qq[n+k+1]$.  Let $\qq[k-3:k-1]=\qq[n+k-2:n+k]=\tt{\alpha\beta\gamma},$ where $\tt{\alpha},$ $\tt{\beta},$ $\tt{\gamma},$ and$\tt{\delta}$ are the images of $\tt{a},$ $\tt{b},$ $\tt{c},$ and $\tt{d}$, respectively, under some permutation of $A$.  If $\qq[k]=\tt{\alpha}$ and $\qq[n+k+1]=\tt{\delta},$ then it follows that $\qq[k+1]=\tt{\delta}$, and $\qq[k+1:n+k+1]$ begins and ends with $\tt{\delta}$.  Otherwise, $\qq[k]=\tt{\delta}$ and $\qq[n+k+1]=\tt{\alpha}.$  Then either $\qq[k+1]=\tt{\alpha}$, or $\qq[k+2]=\tt{\alpha}.$  But then either $\qq[k+1:n+k+1]$, or $\qq[k+2:n+k+1]$, respectively, begins and ends in $\tt{\alpha}.$
\end{proof}

We believe that a stronger version of Lemma~\ref{Bookend} holds.  We conjecture that for every $n\geq 4$, there is a factor of length $n$ in $\pp$ that begins and ends in the same letter.  We have verified this statement for $4\leq n\leq 2000.$  A proof of this fact would reduce the amount of case work required in the proof of the main result of this section, which we are now ready to prove.

We first show that there is a $\tfrac{4}{3}^+$-free circular word on $5$ letters of every even length.  We then construct words of every odd length from the words of even length by inserting a single letter $\tt{e}$ in a carefully chosen location.

\begin{theorem}\label{Even}
For every $n\in\mathbb{N}$, there is a $\tfrac{4}{3}^+$-free circular word on $5$ letters of length $2n$.
\end{theorem}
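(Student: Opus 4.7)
The plan is to construct the required circular $5$-letter word of length $2n$ as a mild modification of $\eta(w)$ for a carefully chosen factor $w$ of $\pp$. Small lengths (below some threshold depending on the bounds that follow) are handled by exhaustive computer search.

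For large $n$, invoke Lemma~\ref{Bookend} to obtain a factor of $\pp$ of length $2n$ or $2n+1$ that begins and ends with $\tt{d}$. Extract a word $w$ of length exactly $2n$ (truncating by one letter if Lemma~\ref{Bookend} furnishes only length $2n+1$). For $n$ large, inspection of the cases in the $\eta$-algorithm shows that the first and last letters of $w$ are preserved, so $W=\eta(w)$ begins and ends in $\tt{d}$, and by Lemma~\ref{weproperties}(a), $W$ is $\tfrac{4}{3}^+$-free as a linear word.

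The main obstacle is the seam: the circular word $(W)$ contains $\tt{dd}$ where the terminal and initial letters meet, a square and hence not $\tfrac{4}{3}^+$-free. To eliminate it, replace the terminal $\tt{d}$ of $W$ by $\tt{e}$, producing a word $W'$ with seam $\tt{e}\cdot\tt{d}$. Two checks are then needed:
\begin{enumerate}
\item $W'$ remains $\tfrac{4}{3}^+$-free as a linear word. This follows from Lemma~\ref{weproperties}(b), which guarantees that the $\tt{e}$'s in $\eta(w)$ are well-separated, together with a careful choice of $w$ (ensuring no $\tfrac{7}{5}$-power of $\pp$ sits too close to the right end of $w$) so that the newly introduced $\tt{e}$ is sufficiently far from any existing $\tt{e}$.
\item No $\tfrac{4}{3}^+$-power of $(W')$ crosses the seam.
\end{enumerate}

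For the cyclic check, note that every $\tfrac{4}{3}^+$-power is a $\tfrac{14}{11}^+$-power, so Lemma~\ref{PansiotImprovement} forces any such repetition coming from $\pp$ to have period at most $10$. Since the seam creates the local factor $\tt{dd}$, which does not occur in $\pp$, Pansiot's results do not apply to factors straddling the seam directly; instead, the case analysis enumerates short-period ($p\le 10$) candidate repetitions that could span the seam and rules each out using the explicit letters of $\pp$ adjacent to the two endpoints of $w$ (together with the earlier description of the only $\tfrac{7}{5}$-powers in $\pp$ given by Result~\ref{Pansiot}(\ref{BadFactors}) and the overlapping configuration $T_2$ in~\eqref{Overlap}). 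Auxiliary subcases must be addressed when $w$ was produced by truncation of a length-$(2n+1)$ factor from Lemma~\ref{Bookend} and therefore does not naturally end in $\tt{d}$, or when the $\eta$-algorithm interferes with the final $\tt{d}$; these require a slightly different seam analysis but are handled similarly. The finite but intricate case analysis at the seam is the main technical obstacle of the proof.
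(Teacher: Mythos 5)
Your approach (circularize a single copy of $\eta(w)$ with one letter at the seam changed to $\tt{e}$) is genuinely different from the paper's mirror-doubling construction, but it has a decisive gap in the seam analysis. You claim that any forbidden repetition crossing the seam can be assumed to have period at most $10$ via Lemma~\ref{PansiotImprovement}. That lemma applies only to factors of $\pp$, whereas a factor of your circular word that straddles the seam has the form $s\,p$ with $s$ a suffix and $p$ a prefix of $\eta(w)$ (with the altered letter) and is in general not a factor of $\pp$. The dangerous configuration is $xyx$ where $y$ contains the seam and the two occurrences of $x$ are \emph{long}: a long factor $x$ occurring both near the right end of $w$ and near the left end of $w$, with a short cyclic gap, gives exponent greater than $\tfrac{4}{3}$ and can have arbitrarily large period, so no finite enumeration of short-period candidates covers it, and nothing in your argument excludes it. This is not a vacuous worry: $\p$ is the fixed point of a primitive morphism, so $\pp$ is (linearly) uniformly recurrent and long prefixes of $w$ do recur within a bounded multiple of their length near the end of $w$; proving that for \emph{every} even length there is a factor $w$ with no wrap-around repetition of exponent greater than $\tfrac{4}{3}$ is a substantial missing argument, essentially a statement about return words of $\pp$ that the proposal never addresses.

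This is exactly the difficulty the paper's construction is designed to avoid: it never closes up a single copy, but sets $W_{2n}=\tt{de}\,\eta(w)\,\tt{ed}\,\sigma(\eta(w))^R$ with $\sigma$ swapping $\tt{d}$ and $\tt{e}$, so that by Lemma~\ref{weproperties}(b) no factor of length $15$ can occur in both halves; this caps $|x|\leq 14$ for any repetition with one occurrence in each half and reduces everything to finite checks in bounded neighbourhoods of the buffers $\tt{de}$ and $\tt{ed}$ (with the exceptional cases repaired in the appendix). Secondary problems with your write-up: truncating the length-$(2n{+}1)$ factor supplied by Lemma~\ref{Bookend} destroys the property that $w$ ends in $\tt{d}$, and the ``careful choice of $w$'' needed for your linear check that the new terminal $\tt{e}$ is far from existing $\tt{e}$'s is asserted rather than established. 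These could perhaps be patched, but the unjustified period bound for repetitions spanning the seam is a genuine gap that breaks the proof as proposed.
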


\begin{proof}
For every $n< 73$, we find such a word by computer search, so we may assume that $n\geq 73$.  By Lemma~\ref{Bookend}, $\pp$ either has a factor of length $n-4$ that begins and ends in $\tt{d}$, or a factor of length $n-5$ that begins and ends in $\tt{d}.$  This gives us two cases:

\bigskip

\noindent
\textbf{Case I:} $\pp$ has a factor $w'$ of length $n-4$ that begins and ends in $\tt{d}$

\medskip

Let $w=\alpha w' \beta$ be a factor of $\pp$ containing $w'$, where $\alpha,\beta\in A$.  By Result~\ref{Pansiot}\ref{AllPermutations}, we may assume that $\alpha=\tt{a}$, and $\beta\in\{\tt{a},\tt{b}\}.$  Let $\sigma$ be the permutation of $A\cup\{\tt{e}\}$ defined by $(\tt{a},\tt{b},\tt{c})(\tt{d},\tt{e})$ in cycle notation.  Consider the word
\[
W_{2n}=\tt{de} \ \eta(w) \ \tt{ed} \ [\sigma(\eta(w))]^R.
\]
We claim that the circular word $(W_{2n})$ is $\tfrac{4}{3}^+$-free, except in a small number of exceptional cases which are handled later by making minor adjustments to $\eta(w)$.

Suppose towards a contradiction that some conjugate of $W_{2n}$ has a factor $xyx$ with $2|x|>|y|.$  First of all, note that $x$ does not contain the factor $\tt{de}$ or the factor $\tt{ed}$, since each of these factors appears at most once in any conjugate of $W_{2n}$.  So we may assume that $x$ is a factor of either $\tt{e}\eta(w)\tt{e}$ or $\tt{d}[\sigma(\eta(w))]^R\tt{d}.$
Further, note that the length $3$ prefix and the length $3$ suffix of $\tt{e}\eta(w)\tt{e}$, namely $\tt{ead}$ and $\tt{d\beta e},$ respectively,  and the length $3$ prefix and the length $3$ suffix of $\tt{d}[\sigma(\eta(w))]^R\tt{d}$, namely $\tt{d\sigma(\beta)e}$ and $\tt{e\sigma(\tt{a})d},$ respectively, appear at most once in any conjugate of $W_{2n}$.  This is independent of the identity of $\beta$, and simply relies on the fact that $\sigma(\tt{a})\neq \tt{a}$ and $\sigma(\tt{b})\neq \tt{b}$.  Therefore, $x$ does not contain any of these factors.  The possibility that $x$ appears as a prefix or suffix of $\tt{e}\eta(w)\tt{e}$ or $\tt{d}\sigma(\eta(w))^R\tt{d}$ with $|x|=2$ is eliminated by a later exhaustive search, so we may assume that $x$ is a factor of either $\eta(w)$ or $\sigma(\eta(w))^R$.

Suppose first that the factor $xyx$ appears in $(W_{2n})$ so that both appearances of $x$ lie in $\eta(w)$.  By Lemma~\ref{weproperties}\ref{FourThirds}, $xyx$ is not a factor of $\eta(w)$.  On the other hand, if $y$ contains all of $\tt{ed}[\sigma(\eta(w))]^R\tt{de}$, then $|y|\geq n+2,$ which is over half the length of $W_{2n}$, and contradicts the assumption that $2|x|>|y|$.  So the factor $xyx$ does not appear in such a way that both appearances of $x$ lie in $\eta(w)$.  By a similar argument, the factor $xyx$ does not appear in such a way that both appearances of $x$ lie in $\sigma(\eta(w))^R$ (note that $\sigma(\eta(w))^R$ is also $\tfrac{4}{3}^+$-free, because this property is preserved under permutation of the letters and reversal).

\begin{figure}

%
%

\centering{
\begin{tikzpicture}
\draw (3.5,0) rectangle (4,0.6) node[pos=.5] {$\tt{de}$};
\draw (4,0) -- (7,0);
\draw (7,0) -- (7,0.6);
\draw (4,0.6) -- (7,0.6);
\draw (0.5,0) -- (3.5,0);
\draw (0.5,0) -- (0.5,0.6);
\draw (0.5,0.6) -- (3.5,0.6);

\draw[dashed] (1,0) -- (1,-0.6);
\draw[dashed] (2.4,0) -- (2.4,-0.6);
\draw[dashed] (4.4,0) -- (4.4,-0.6);
\draw[dashed] (5.8,0) -- (5.8,-0.6);

\draw (2,0.3) node {$\sigma(\eta(w))^R$};
\draw (5.5,0.3) node {$\eta(w)$};
\draw (1.7,-0.3) node {$x$};
\draw (3.4,-0.3) node {$y$};
\draw (5.1,-0.3) node {$x$};
\draw (1,-0.6) -- (5.8,-0.6);
\end{tikzpicture}\\

\medskip

\begin{tikzpicture}
\draw (3.5,0) rectangle (4,0.6) node[pos=.5] {$\tt{ed}$};
\draw (4,0) -- (7,0);
\draw (7,0) -- (7,0.6);
\draw (4,0.6) -- (7,0.6);
\draw (0.5,0) -- (3.5,0);
\draw (0.5,0) -- (0.5,0.6);
\draw (0.5,0.6) -- (3.5,0.6);

\draw[dashed] (2.5,0) -- (2.5,-0.6);
\draw[dashed] (3.3,0) -- (3.3,-0.6);
\draw[dashed] (4.9,0) -- (4.9,-0.6);
\draw[dashed] (5.7,0) -- (5.7,-0.6);

\draw (2,0.3) node {$\eta(w)$};
\draw (5.5,0.3) node {$\sigma(\eta(w))^R$};
\draw (2.9,-0.3) node {$x$};
\draw (4.1,-0.3) node {$y$};
\draw (5.3,-0.3) node {$x$};
\draw (2.5,-0.6) -- (5.7,-0.6);
\end{tikzpicture}
}
\caption{Possible appearances of $xyx$ in $(W_{2n})$.}
\label{Appearances}
\end{figure}

So we may assume that $xyx$ appears in $(W_{2n})$ in such a way that one appearance of $x$ is in $\eta(w)$ and the other is in $\sigma(\eta(w))^R$.  By Lemma~\ref{weproperties}\ref{Frequency}, every factor of length $15$ in $\eta(w)$ contains at least two $\tt{d}$'s and at most one $\tt{e}$.  Since $\sigma$ swaps $\tt{d}$ and $\tt{e}$, every factor of length $15$ of $\sigma(\eta(w))^R$ contains at least two $\tt{e}$'s and at most one $\tt{d}$.  Since $x$ must appear in both $\eta(w)$ and $\sigma(\eta(w))^R$, we must have $|x|\leq 14$.

The remainder of the proof is completed by a finite search.  Since $|x|\leq 14$ and $|y|< 2|x|$, we must have $|xyx|< 56$.  Further, since $xyx$ must appear as in Figure~\ref{Appearances}, we only need to search a short factor of $\sigma(\eta(w))^R \tt{de}\eta(w)$ (and $\eta(w)\tt{ed}\sigma(\eta(w))^R$) around $\tt{de}$ ($\tt{ed}$, respectively).  In fact, since the two appearances of $x$ must lie on opposite sides of $\tt{de}$ ($\tt{ed}$, respectively), $xyx$ can extend at most $39$ letters away from $\tt{de}$ ($\tt{ed}$, respectively).

So it suffices to check that $\sigma(p_{39}(\eta(w)))^R \tt{de}p_{39}(\eta(w))$ and $s_{39}(\eta(w))\tt{ed}\sigma(s_{39}(\eta(w)))^R$ are $\tfrac{4}{3}^+$-free.  We know that $\eta(w)$ begins in $\tt{ad}$ and ends in either $\tt{da}$ or $\tt{db}$, but we can't assume anything else about the prefix or the suffix of $\eta(w)$; we just know that $w$ is a factor of $\pp$.  The length $39$ prefix of $\eta(w)$ is completely determined by the length $60$ prefix of $w$.  The length $60$ prefix of $w$ is required because factors of length $14$ (nonoverlapping $\tfrac{7}{5}$-powers) and $22$ (overlapping $\tfrac{7}{5}$-powers) determine whether or not we change a given $\tt{d}$ in $w$ to an $\tt{e}$ in $\eta(w)$.  Similarly, the length $39$ suffix of $\eta(w)$ is completely determined by the length $60$ suffix of $w$.  This is where the assumption that $n\geq 73$ is used, so that we may assume that all nonoverlapping $\tfrac{7}{5}$-powers in $s_{52}(w)$ (i.e.\ all those that could impact $s_{39}(\eta(w))$) are eliminated in $\eta(w)$ by changing a $\tt{d}$ to an $\tt{e}$ in the prefix (and not the suffix) of the $\tfrac{7}{5}$-power.

We run through all possible prefixes $p_{39}(\eta(w))$, and find that
\[
\sigma(p_{39}(\eta(w)))^R \tt{de}p_{39}(\eta(w))
\]
is $\tfrac{4}{3}^+$-free in all but a small number of cases.  In each of these exceptional cases, we make a minor adjustment in a short prefix of $\eta(w)$ to fix the issue; see Appendix~\ref{Appendix1} for details.  Similarly, we run through all possible suffixes $s_{39}(\eta(w))$, and find that
\[
s_{39}(\eta(w))\tt{ed}\sigma(s_{39}(\eta(w)))^R
\]
is $\tfrac{4}{3}^+$-free in all but a small number of cases.  In each of these exceptional cases, we make a minor adjustment in a short suffix of $\eta(w)$ to fix the issue; see Appendix~\ref{Appendix1} for details.

\bigskip

\noindent
\textbf{Case II:} $\pp$ has a factor $w'$ of length $n-5$ that begins and ends in $\tt{d}$

\medskip

Let $w=\alpha\beta w' \gamma$ be a factor of $\pp$ containing $w'$, where $\alpha,\beta,\gamma\in A$.  By Result~\ref{Pansiot}\ref{AllPermutations}, we may assume that $\alpha=\tt{a}$, $\beta=\tt{b},$ and $\gamma\in\{\tt{a},\tt{b},\tt{c}\}$.  We have two subcases:

\bigskip

\noindent
\textbf{Case II(a):} $\gamma\in\{\tt{a},\tt{c}\}$.  

\medskip

Let $\tau$ be the permutation of $A\cup\{\tt{e}\}$ defined by $(\tt{a},\tt{c})(\tt{d},\tt{e})$ in cycle notation (note that $\tt{b}$ is fixed by $\tau$).  Consider the word
\[
W_{2n}=\tt{de} \ \eta(w) \ \tt{ed} \ [\tau(\eta(w))]^R.
\]
We claim that the circular word $(W_{2n})$ is $\tfrac{4}{3}^+$-free, except in a small number of exceptional cases which are handled later by making minor adjustments to $\eta(w)$.

Suppose towards a contradiction that some conjugate of $W$ has a factor $xyx$ with $2|x|>|y|.$  As in Case I, $x$ does not contain the factor $\tt{de}$ or the factor $\tt{ed}$.  So we may assume that $x$ is a factor of either $\tt{e}\eta(w)\tt{e}$ or $\tt{d}[\tau(\eta(w))]^R\tt{d}.$
Further, note that the length $3$ suffix of $\tt{e}\eta(w)\tt{e}$, namely $\tt{d\gamma e},$ and the length $3$ prefix of $\tt{d}[\tau(\eta(w))]^R\tt{d}$, namely $\tt{d\tau(\gamma)e}$ appear only once in $(W_{2n})$.  Therefore, $x$ does not contain either of these factors, and we may assume that $x$ is a factor of either $\tt{e}\eta(w)$ or $\tau(\eta(w))^R\tt{d}$.  Note that the length $4$ prefix of $\tt{e}\eta(w)$, namely $\tt{eabd}$, and the length $4$ suffix of $\tau(\eta(w))^R\tt{d}$, namely $\tt{ebcd}$, may both appear elsewhere in $(W_{2n})$, so we cannot immediately eliminate the possibility that $x$ contains one of these factors (as we did in Case I).

Suppose first that the factor $xyx$ appears in $(W_{2n})$ so that both appearances of $x$ lie in $\tt{e}\eta(w)$.  By Lemma~\ref{weproperties}\ref{FourThirds}, $xyx$ is not a factor of $\eta(w)$.  Further, it cannot be the case that $y$ contains all of $\tt{ed}[\tau(\eta(w))]^R\tt{d}$, because then $|y|\geq n+1,$ which is over half the length of $W_{2n}$.  The only possibility that remains is that $xyx$ is a prefix of $\tt{e}\eta(w)$ (note that $\tt{d}w$ is not necessarily a factor of $\pp$ so we cannot apply Lemma~\ref{weproperties}\ref{FourThirds} directly here).  By exhaustive search of the possible prefixes of $\eta(w)$ of length $19$, we may assume that $|x|\geq 6$ and $|xy|\geq 11$.  Write $x=\tt{e}x'$, so that $xyx=\tt{e}x'y\tt{e}x'$.    We see that $x'y\tt{e}x'$ is a repetition of exponent $\frac{|xyx|-1}{|xy|}$.  Since $|xy|\geq 11$, by Lemma~\ref{PansiotImprovement}, we must have 
\begin{align*}
\frac{|xyx|-1}{|xy|}\leq \tfrac{14}{11} \ \ \ 
&\Rightarrow \ \ \ 8|x|-11\leq 3|y|.
\end{align*}
Since $|x|\geq 6,$ we have $8|x|-11\geq 6|x|+12-11>6|x|,$ and thus $2|x|<|y|,$ a contradiction.  By a similar argument, the factor $xyx$ cannot appear in such a way that both appearances of $x$ lie in $\tau(\eta(w))^R\tt{d}$.

So we may assume that $xyx$ appears in $(W_{2n})$ in such a way that one appearance of $x$ is in $\tt{e}\eta(w)$ and the other is in $\tau(\eta(w))^R\tt{d}$.  By Lemma~\ref{weproperties}\ref{Frequency}, every factor of $\eta(w)$ of length $15$ contains at least two $\tt{d}$'s and at most one $\tt{e}$.  Since $\tau$ swaps $\tt{d}$ and $\tt{e}$, every factor of length $15$ of $\tau(\eta(w))^R$ contains at least two $\tt{e}$'s and at most one $\tt{d}$.  

Suppose that $xyx$ appears so that one appearance of $x$ is a prefix of $\tt{e}\eta(w)$ and $|x|\geq 17$.  Then $x[2:16]$ is a factor of $\eta(w)$ of length $15$, and hence is not a factor of $\tau(\eta(w))^R$.  But then $x$ is not a factor of $\tau(\eta(w))^R\tt{d}$.   If $x$ is a prefix of $\tt{e}\eta(w)$ and $|x|\leq 16$, then certainly $xyx$ appears in $\tau(\eta(w))^R\tt{de}\eta(w)$ and not $\tt{e}\eta(w)\tt{ed}\tau(\eta(w))^R\tt{d},$ since $n\geq 73.$  Since $|x|\leq 16$, $xyx$ must in fact appear in $\tau(\eta(w)[1:46])^R\tt{de}\eta(w)[1:15]$.  We eliminate this possibility by exhaustively checking the possible prefixes of $\eta(w)$.  By a similar argument, we may assume that $x$ is not a suffix of $\tau(\eta(w))^R\tt{d}$.   

But then $xyx$ appears in $(W_{2n})$ in such a way that one appearance of $x$ is in $\eta(w)$ and the other is in $\tau(\eta(w))^R$.  We then have $|x|\leq 14$, and the proof is completed by an exhaustive search as in Case I.  Here we check that
\[
\tau(p_{46}(\eta(w)))^R \, \tt{de}\, p_{46}(\eta(w))
\] 
is $\tfrac{4}{3}^+$-free (we check a longer prefix than in Case I as this is required in the previous paragraph), and that
\[
s_{39}(\eta(w))\, \tt{ed}\, \tau(s_{39}(\eta(w)))^R
\]
is $\tfrac{4}{3}^+$-free.  Once again, there are several exceptional cases in which we need to make a minor adjustment in a short prefix and/or suffix of $\eta(w)$; see Appendix~\ref{Appendix2} for details.

\bigskip

\noindent
\textbf{Case II(b):} $\gamma=\tt{b}$.

\medskip

Here, we have $w=\tt{ab} w' \tt{b}$, and recall that $|w|=n-2$.  While we would have liked to find a way to handle this case as we did for Case I and Case II(a), we must do something slightly different here to avoid repetitions of short length close to the `buffers' $\tt{de}$ and $\tt{ed}$.  Let $\delta\in A$ be a letter such that $w\delta$ is a factor of $\pp$.  Since $w$ ends in $\tt{db},$ we must have $\delta\in\{a,c\}$, meaning two cases, though the second reduces to an already completed case.

\bigskip

\noindent
\textbf{Case II(b1):} $\delta=\tt{a}$

\medskip

Let $\dot{w}=\tt{b}w'\tt{b}$ and $\ddot{w}=\tt{ab}w'\tt{ba}.$  Note that $|\dot{w}|=n-3$ and $|\ddot{w}|=n-1$.  Let $\pi$ be the permutation of $A$ defined by $(\tt{b},\tt{d},\tt{c})$ and let $\rho$ be the permutation of $A\cup \{\tt{e}\}$ defined by $(\tt{d},\tt{e})$ (i.e.\ $\rho$ swaps $\tt{d}$ and $\tt{e}$).  Define
\[
W_{2n}=\tt{de} \ \eta(\dot{w}) \ \tt{ed} \ \rho\left(\eta(\pi(\ddot{w}))\right)^R.
\]
We claim that the circular word $(W_{2n})$ has no $\tfrac{4}{3}^+$-powers, except in a small number of cases which are handled later by making small adjustments to $\eta(\dot{w})$ and/or $\rho(\eta(\pi(\ddot{w})))^R$.  Most of the proof is similar to that of Case I, so we omit some details.

Suppose towards a contradiction that some conjugate of $W_{2n}$ has a factor $xyx$ with $2|x|>|y|.$  First of all, note that $x$ does not contain the factor $\tt{de}$ or the factor $\tt{ed}$, so we may assume that $x$ is a factor of either $\tt{e}\dot{w}_{\tt{e}}\tt{e}$ or $\tt{d}\rho\left([\pi(\ddot{w})]_{\tt{e}}\right)^R\tt{d}.$
Further, note that the length $3$ prefix and the length $3$ suffix of $\tt{e}\eta(\dot{w})\tt{e}$, namely $\tt{ebd}$ and $\tt{dbe},$ respectively,  and the length $3$ prefix and the length $3$ suffix of $\tt{d}\rho(\eta(\pi(\ddot{w})))^R\tt{d}$, namely $\tt{dae}$ and $\tt{ead},$ respectively, appear at most once in any conjugate of $W_{2n}$.  Therefore, $x$ does not contain any of these factors, and we may assume that $x$ is a factor of either $\eta(\dot{w})$ or $\rho(\eta(\pi(\ddot{w})))^R$.

By an argument similar to the one used in Case I, the factor $xyx$ cannot appear in such a way that both appearances of $x$ lie in $\eta(\dot{w})$ (or $\rho(\eta(\pi(\ddot{w})))^R$).  So we may assume that $xyx$ appears in $(W_{2n})$ in such a way that one appearance of $x$ is in $\eta(\dot{w})$ and the other is in $\rho(\eta(\pi(\ddot{w})))^R$.  By Lemma~\ref{weproperties}\ref{Frequency}, every factor of length $15$ in $\eta(\dot{w})$ or $\eta(\pi(\ddot{w}))$ contains at least two $\tt{d}$'s and at most one $\tt{e}$.  Since $\rho$ swaps $\tt{d}$ and $\tt{e}$, every factor of length $15$ of $\rho(\eta(\pi(\ddot{w})))^R$ contains at least two $\tt{e}$'s and at most one $\tt{d}$.  Since $x$ must appear in both $\eta(\dot{w})$ and $\rho(\eta(\pi(\ddot{w})))^R$, we must have $|x|\leq 14$.

The remainder of the proof is completed by a finite search, as in Case I.  It suffices to check that 
\[
\rho(p_{39}(\eta(\pi(\ddot{w}))))^R \tt{de}p_{39}(\eta(\dot{w}))
\]
and
\[
s_{39}(\eta(\dot{w}))\tt{ed}\rho(s_{39}(\eta(\pi(\ddot{w}))))^R
\]
are $\tfrac{4}{3}^+$-free.  As in Case I and Case II(a), there are several exceptional cases in which we need to make a minor adjustment in a short prefix and/or suffix of $\eta(\dot{w})$ or $\eta(\pi(\ddot{w}))$; see Appendix~\ref{Appendix3} for details.

\bigskip

\noindent
\textbf{Case II(b2):} $\delta=\tt{c}$

\medskip

We show that $\pp$ either has a factor of length $n-4$ that begins and ends in $\tt{d}$ (and we are back in Case I), or has a factor $z$ of length $n-5$ that begins and ends in $\tt{d}$ and appears internally as $\tt{ab}z\tt{ba}$, i.e.\ a factor of length $n-1$ that begins in $\tt{abd}$ and ends in $\tt{dba}$ (and we are back in Case II(b1)).

Suppose otherwise that $\pp$ has no factors of either of these forms.  By Lemma~\ref{Pansiot}\ref{AllPermutations}, $\pp$ also does not contain a factor of either of these forms under any permutation of $A$.  Let $k\in\mathbb{N}$ satisfy $\pp[k:n+k-2]=\tt{ab}w'\tt{bc}$.   For ease of reading, we provide Table~\ref{LastCaseTable}.  The entries in black are known initially, while the entries in red are determined by the following arguments.  First of all, note that $\pp[n+k-5]=\tt{c}$, since otherwise $\pp[k:n+k-5]$ is a factor of length $n-4$ that begins and ends in $\tt{a}$.  Hence, $\pp[n+k-2]$ is encoded by $\tt{0}$, and by Lemma~\ref{Pansiot}\ref{TwoZeros}, $\pp[n+k-1]$ must be encoded by $\tt{1}$.  Hence, $\pp[n+k-1]=\tt{a}.$  Now we see that $\pp[k+3]=\tt{a}$, since otherwise $\pp[k+3:n+k-2]$ begins and ends in $\tt{c}$, and the encoding for $\pp[k+3]$ is $\tt{0}$.  Hence, the encoding for $\pp[k+4]$ is $\tt{1}$, and $\pp[k+4]=\tt{c}$.

Now if $\pp[n+k]=\tt{d}$, then the factor $\pp[k+2:n+k]$ under the permutation $(\tt{a},\tt{b},\tt{c},\tt{d})$ puts us back in Case II(b1), so we may assume that $\pp[n+k]=\tt{b}$, and since this is encoded by $\tt{0}$, the next letter in the encoding is $\tt{1}$, and hence $\pp[n+k+1]=\tt{d}$.  Now $\pp[k+5]$ cannot be $\tt{b}$, so it must be $\tt{d}$.  Hence, $\pp[k+5]$ is encoded by $\tt{0}$ and it follows that $\pp[k+6]$ is encoded by $\tt{1}$, so that $\pp[k+6]=\tt{b}$.

Now if $\pp[n+k+2]=\tt{c}$, then the factor $\pp[k+4:n+k+2]$ under the permutation $(\tt{a},\tt{c})(\tt{b},\tt{d})$ puts us back in Case II(b1), so we may assume that $\pp[n+k+2]=\tt{a}$.  Then $\pp[n+k+2]$ is encoded by $\tt{0}$, and $\pp[n+k+3]$ is encoded by $\tt{1}$, so $\pp[n+k+3]=\tt{c}$.  Now $\pp[k+7]$ cannot be $\tt{a}$, so it must be $\tt{c}$.  Hence $\pp[k+7]$ is encoded by $\tt{0}$ and $\pp[k+8]=\tt{a}$ is encoded by $\tt{1}$.  Finally, if $\pp[n+k+4]=\tt{b}$, then the factor $\pp[k+6:n+k+4]$ under the permutation $(\tt{a},\tt{d},\tt{c},\tt{b})$ puts us back in Case II(b1), so we may assume that $\pp[n+k+4]=\tt{d}$, and hence is encoded by $\tt{0}.$  However, this is impossible, because $\p$ does not have $\tt{0101010}$ as a factor.
\end{proof}

\begin{table}
\centering
\resizebox{\columnwidth}{!}{%
\begin{tabular}{c | c c c c c c c c c c}
\midrule
& $\pp[k]$ &  $\pp[k+1]$ & $\pp[k+2]$ & $\pp[k+3]$ & $\pp[k+4]$ & $\pp[k+5]$ & $\pp[k+6]$ & $\pp[k+7]$ & $\pp[k+8]$  \\
&  $\tt{a}$ & $\tt{b}$  & $\tt{d}$ & 
\\ 
encoding & & & & 
\\\midrule 
& $\pp[n+k-5]$ & $\pp[n+k-4]$ & $\pp[n+k-3]$ & $\pp[n+k-2]$ & $\pp[n+k-1]$ & $\pp[n+k]$ & $\pp[n+k+1]$ & $\pp[n+k+2]$ & $\pp[n+k+3]$ & $\pp[n+k+4]$\\
& 
& $\tt{d}$  & $\tt{b}$  & $\tt{c}$ & 
\\
encoding & & & & 
\\\midrule
\end{tabular}
}
\caption{Visual aid for Case II(b2) of the proof of Theorem~\ref{Even}.  Columns contain the first and last letter of factors of length $n-4$ in $\pp$.  The reader can complete the table as they read the proof.}
\label{LastCaseTable}
\end{table}

\begin{theorem}\label{Odd}
For every $n\geq 0$, there is a $\tfrac{4}{3}^+$-free circular word on $5$ letters of length $2n+1$.
\end{theorem}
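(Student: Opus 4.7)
The plan is to reduce odd lengths to even lengths via Theorem~\ref{Even}. For small $n$ (up to some threshold comparable to the one in Theorem~\ref{Even}, say $n<73$), I would simply exhibit $\tfrac{4}{3}^+$-free circular words of length $2n+1$ by computer search. For larger $n$, I would start from the $\tfrac{4}{3}^+$-free circular word $W_{2n}$ of length $2n$ constructed in Theorem~\ref{Even}. Depending on which of Cases I, II(a), or II(b1) of that proof applies, $W_{2n}$ has the shape
\[
W_{2n} = \tt{de}\, E_1\, \tt{ed}\, E_2,
\]
where $E_1$ is $\eta$ applied to a suitable factor of $\pp$ and $E_2$ is a (possibly permuted) reversal of $\eta$ applied to another suitable factor of $\pp$. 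I would then insert a single extra $\tt{e}$ into the left buffer and define
\[
W_{2n+1} = \tt{dee}\, E_1\, \tt{ed}\, E_2,
\]
a word of length $2n+1$.

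The heart of the argument is to mimic the case analysis of Theorem~\ref{Even} in order to show that $\circ{W_{2n+1}}$ is $\tfrac{4}{3}^+$-free. The essential new observation is that, by Lemma~\ref{weproperties}\ref{Frequency}, any two occurrences of $\tt{e}$ inside $E_1$ or inside $E_2$ are separated by at least $14$ non-$\tt{e}$ letters, so the factor $\tt{ee}$ cannot occur inside $E_1$ or $E_2$. Hence $\tt{dee}$ (and its reverse $\tt{eed}$) appears exactly once in any conjugate of $W_{2n+1}$, and the right buffer $\tt{ed}$ remains unique as before. Consequently, a putative $\tfrac{4}{3}^+$-power $xyx$ with $2|x|>|y|$ cannot have $x$ containing $\tt{dee}$ or $\tt{ed}$, which forces $x$ to lie entirely in $\tt{ee}\, E_1\, \tt{e}$ or in $\tt{d}\, E_2\, \tt{d}$.

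From this point the argument parallels Theorem~\ref{Even}: the possibility that both occurrences of $x$ lie in the same half is ruled out by Lemma~\ref{weproperties}\ref{FourThirds} together with a length count (using that $|y|< 2|x|$ and $|W_{2n+1}| = 2n+1$), and in the remaining mixed case the $\tt{d}$/$\tt{e}$ density of Lemma~\ref{weproperties}\ref{Frequency} forces $|x|\leq 14$, reducing everything to a finite computer check of short windows around the two buffer regions. The main obstacle I expect is not any new conceptual difficulty but the enumeration of exceptional boundary configurations: as in Appendices~\ref{Appendix1}--\ref{Appendix3}, a handful of prefixes/suffixes of $E_1$ and $E_2$ will produce short $\tfrac{4}{3}^+$-powers across the enlarged buffer and will need small local adjustments. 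Since three subcases (Case I, II(a), II(b1)) must be handled, and each may need its own list of corrections for the new left buffer $\tt{dee}$, the bookkeeping — not the underlying argument — will be the most laborious part.
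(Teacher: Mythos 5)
Your construction breaks at the very first step: inserting the extra $\tt{e}$ into the left buffer produces the factor $\tt{ee}$, which is a square (exponent $2>\tfrac{4}{3}$), so the circular word $\tt{dee}\,E_1\,\tt{ed}\,E_2$ is never $\tfrac{4}{3}^+$-free, no matter how the boundary cases are adjusted. In your own notation this is the repetition $xyx$ with $x=\tt{e}$ and $y$ empty, which satisfies $2|x|>|y|$; the uniqueness of the factor $\tt{dee}$ in the circular word is irrelevant, since a forbidden power need not occur twice. More generally, inserting any single letter at or next to the buffers is exactly the regime the even-length proof works hardest to control (this is why Case II(b) of Theorem~\ref{Even} needs special treatment), so one should expect short repetitions there that cannot be patched by finitely many local corrections.

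The paper's proof avoids this by inserting the new $\tt{e}$ deep in the interior of $\eta(w)$ (respectively $\eta(\dot{w})$), far from both buffers. By an exhaustive check, every factor $u$ of $\pp$ of length $642$ has the property that $\eta(u)[23:-23]$ contains a structured factor
\[
z=\gamma\beta\alpha\gamma\tt{d}\alpha\beta\tt{d}\gamma\alpha\tt{d}\beta\gamma\tt{d}\alpha\gamma\beta\alpha,
\]
where $\{\alpha,\beta,\gamma\}=\{\tt{a},\tt{b},\tt{c}\}$; this is why the base case runs to $n\leq 644$ rather than your $n<73$. The $\tt{e}$ is inserted at one of three positions inside $z$, the choice governed by which of the conditions (a), (b), (c) on $\alpha,\beta,\gamma$ versus $\lambda$ and $\tt{a}$ holds, precisely so that the newly created length-$3$ factors of the form $\tt{d}\kappa\tt{e}$ and $\tt{e}\kappa\tt{d}$ do not recur elsewhere in the circular word. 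Any new repetition $xyx$ must contain the inserted $\tt{e}$ in an occurrence of $x$; the uniqueness of those length-$3$ factors forces $|x|\leq 4$, and because the insertion point is at least $22$ letters from either end of $\eta(w)$, such a short $x$ would force $|y|\geq 2|x|$, a contradiction, with the few remaining tiny candidates for $x$ eliminated by inspection. If you want to salvage your approach you would have to insert a letter different from its neighbours and re-do a uniqueness analysis at the buffer, but the paper's interior-insertion argument is the mechanism that actually makes the odd-length case go through.
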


\begin{proof}
We first verify the statement directly for $n\leq 644$ by computer.  Now suppose $n\geq 645.$  Take the $\tfrac{4}{3}^+$-free circular word $(W_{2n})$ on $A\cup \{e\}$ of length $2n$ constructed in Theorem~\ref{Even}.  Note that $W_{2n}$ may have been constructed with one of the adjustments outlined in Appendix~\ref{Appendix}; our arguments still apply in each of these exceptional cases.  In particular, $W_{2n}$ contains the factor $\eta(w)[23:-23]$ (Case I or Case II(a)) or $\eta(\dot{w})[23:-23]$ (Case II(b1)), where $w$ is a factor of length $n-2\geq 643$ of $\pp$ and $\dot{w}$ is a factor of length $n-3\geq 642$ of $\pp$.

By exhaustive search, if $u$ is any factor of $\pp$ of length $642,$ then $\eta(u)[23:-23]$ contains the factor
\begin{align*}
z=\gamma\beta\alpha\gamma\tt{d}\alpha\beta\tt{d}\gamma \alpha\tt{d} \beta \gamma \tt{d} \alpha\gamma\beta\alpha
\end{align*}
where $\alpha,$ $\beta,$ and $\gamma$ are the images of $\tt{a},$ $\tt{b},$ and $\tt{c}$ under some permutation of $\{\tt{a},\tt{b},\tt{c}\}$.  Note that we ignore the prefix of length $22$ and the suffix of length $22$ of $\eta(u)$ to ensure that this property extends to all factors of $\pp$ of length greater than $642$.  Note that $z$ contains the factors $\tt{d}\alpha\beta \tt{d}$, $\tt{d}\gamma\alpha\tt{d}$, and $\tt{d}\beta\gamma\tt{d}$.  

Now for concreteness, assume that $W_{2n}$ was created using the construction of Case I of Theorem~\ref{Even}.  The other cases are handled by similar arguments.  Recall that
\[
W_{2n}=\tt{de}\,\eta(w)\,\tt{ed},\sigma(\eta(w))^R,
\]
and that $\eta(w)$ begins in $\tt{ad}$ and ends in $\tt{d} \lambda$ for some letter $\lambda\in\{\tt{a},\tt{b}\}$.  Hence, $\sigma(\eta(w))^R$ begins in $\sigma(\lambda \tt{d})=\sigma(\lambda)\tt{e}$ and ends in $\sigma(\tt{da})=\tt{eb}.$  So the `buffers' $\tt{de}$ and $\tt{ed}$ appear internally as
\[
\tt{ebdead} \ \ \mbox{ and } \ \ \tt{d}\lambda\tt{ed}\sigma(\lambda)\tt{e},
\]
respectively.  By construction, these factors contain the only appearances of factors of the form $\tt{d}\kappa \tt{e}$ or $\tt{e}\kappa \tt{d}$ in $(W_{2n})$, where $\kappa$ is a single letter.  Exactly one letter from $\{\alpha,\beta,\gamma\}$ is equal to $\lambda$, and likewise for $\tt{a}$, so at least one of the following must be true:

\begin{enumerate}
\item $\alpha\neq \lambda$ and $\beta\neq \tt{a}$;
\item $\gamma\neq \lambda$ and $\alpha\neq \tt{a}$; or
\item $\beta\neq \lambda$ and $\gamma\neq \tt{a}$.
\end{enumerate}

We obtain a word $W_{2n+1}$ of length $2n+1$ by inserting a single $\tt{e}$ into $W_{2n}$, and claim that $(W_{2n+1})$ is $\tfrac{4}{3}^+$-free.

If (a) is true, insert an $\tt{e}$ into the first appearance of $z$ in $\eta(w)[23:-23]$ as follows:
\[
\gamma\beta\alpha\gamma\tt{d}\alpha\, \tt{e}\, \beta\tt{d}\gamma \alpha \tt{d}\beta \gamma\tt{d}\alpha\gamma\beta\alpha.
\]
Suppose towards a contradiction that $(W_{2n+1})$ contains a factor $xyx$ with $2|x|>|y|$.  Since $(W_{2n})$ is $\tfrac{4}{3}^+$-free and we inserted a single letter $\tt{e}$ into $W_{2n}$ to create $W_{2n+1}$, it must be the case that the new letter $\tt{e}$ appears inside one of the instances of $x$.  We first claim that $x$ cannot contain either of the factors $\tt{d}\alpha \tt{e}$ or $\tt{e}\beta\tt{d}$.  Suppose first that $x$ contains $\tt{d}\alpha\tt{e}$ as a factor.  Then certainly $\tt{d}\alpha\tt{e}$ must appear twice in some conjugate of $W_{2n+1}$.  Since $\alpha\neq \lambda$, we must have $\alpha=\sigma(\lambda)$.  However, the factor $\tt{d}\alpha\tt{e}=\tt{d}\sigma(\lambda)\tt{e}$ appears only inside the factors $\tt{d}\alpha\tt{e}\beta\tt{d}$ and $\tt{d}\lambda\tt{ed}\sigma(\lambda)\tt{e}$, which each appear only once.  To the left of $\tt{d}\alpha\tt{e}$ in $\tt{d}\alpha\tt{e}\beta\tt{d}$ is the letter $\gamma$, which is different from $\tt{d}$, the letter to the left of $\tt{d}\sigma(\lambda)\tt{e}$ in $\tt{d}\lambda\tt{ed}\sigma(\lambda)\tt{e}$.  On the other hand, the length $2$ factor to the right of $\tt{d}\alpha\tt{e}$ in $\tt{d}\alpha\tt{e}\beta\tt{d}$ is $\beta\tt{d}$, while the length $2$ factor to the right of $\tt{d}\sigma(\lambda)\tt{e}$ in $\tt{d}\lambda\tt{ed}\sigma(\lambda)\tt{e}$ does not end in $\tt{d}$.  We conclude that $|x|\leq 4$.  However, since we inserted the new letter $\tt{e}$ somewhere in $\eta(w)[23:-23]$, this certainly forces $|y|\geq 2|x|$, a contradiction.  The proof that $x$ cannot contain $\tt{e}\beta\tt{d}$ is similar.

The only remaining possibilities are $x=\tt{e}$, $x=\alpha\tt{e}$, $x=\tt{e}\beta$, and $x=\alpha\tt{e}\beta$, and these are all eliminated by inspection.

Otherwise, if (b) is true, insert an $\tt{e}$ into the first appearance of $z$ in $W_{2n}$ as follows:
\[
\gamma\beta\alpha\gamma\tt{d}\alpha \beta\tt{d}\gamma \, \tt{e}\, \alpha \tt{d}\beta \gamma\tt{d}\alpha\gamma\beta\alpha.
\]
Otherwise, if (c) is true, insert an $\tt{e}$ into the first appearance of $z$ in $W_{2n}$ as follows:
\[
\gamma\beta\alpha\gamma\tt{d}\alpha \beta\tt{d}\gamma  \alpha \tt{d}\beta \, \tt{e}\, \gamma\tt{d}\alpha\gamma\beta\alpha.
\]
The proofs for (b) and (c) are similar to the proof for (a).
\end{proof}

\section{Conclusion}

In this article, we proved that $\CRTS(4)=\tfrac{3}{2}$, and $\CRTS(5)=\tfrac{4}{3}$, providing the last unknown values of the strong circular repetition threshold:
\[
\CRTS(k)=\begin{cases}
\tfrac{5}{2} &\mbox{ if } k=2;\\
3 &\mbox{ if } k=3; \mbox{ and}\\
\frac{\lceil k/2\rceil+1}{\lceil k/2\rceil} &\mbox{ if } k\geq 4.
\end{cases}
\]
We also gave a proof that $\CRTI(3)=\CRTW(3)=\tfrac{7}{4}$ by adapting the method used to prove $\CRTS(4)=\tfrac{3}{2}$.  While we conjecture that $\CRTI(k)=\CRTW(k)=\RT(k)$ for all $k\geq 4$, techniques different from those presented here will likely be needed.

\providecommand{\bysame}{\leavevmode\hbox to3em{\hrulefill}\thinspace}
\providecommand{\MR}{\relax\ifhmode\unskip\space\fi MR }
\providecommand{\MRhref}[2]{%
  \href{http://www.ams.org/mathscinet-getitem?mr=#1}{#2}
}
\providecommand{\href}[2]{#2}

\titleformat{\section}{\large\bfseries}{\appendixname~\thesection .}{0.5em}{}
\appendix

\section{Exceptional Cases}\label{Appendix}

\subsection{Exceptions for Case I}\label{Appendix1}

There are $12$ possible prefixes of length $39$ of $\eta(w)$ for which we encounter a $\tfrac{4}{3}^+$-power in 
\[
\sigma(p_{39}(\eta(w)))^R \tt{de}p_{39}(\eta(w)),
\]
and $16$ possible suffixes of length $39$ of $\eta(w)$ for which we encounter a $\tfrac{4}{3}^+$-power in 
\[
s_{39}(\eta(w))\tt{ed}\sigma(s_{39}(\eta(w)))^R.
\]

We circumvent this problem by defining $\eta'(w)$ and $\eta''(w)$, which are both obtained from $\eta(w)$ by making a minor adjustment to a short prefix and/or suffix of $\eta(w)$.  Begin by setting $\eta'(w)=\eta(w)$ and $\eta''(w)=\eta(w),$ and then make the changes described below.  Redefine
\[
W_{2n}=\tt{de} \ \eta'(w) \ \tt{ed} \ \sigma(\eta''(w))^R,
\]
and we claim that $(W_{2n})$ is $\tfrac{4}{3}^+$-free.

\begin{enumerate}
\item\label{fixa} If
\begin{align*}
p_{39}(\eta(w))&=\tt{adbac\light{d}abcadcbac\light{e}bcabdacbadcabcebacbdabc},\\ 
p_{39}(\eta(w))&=\tt{adbac\light{d}abcadcbac\light{e}bcabdacbadcabcdbacbdabc},\\
p_{39}(\eta(w))&=\tt{adcab\light{d}acbadbcab\light{e}cbacdabcadbacbecabcdacb}, \mbox{ or} \\
p_{39}(\eta(w))&=\tt{adcab\light{d}acbadbcab\light{e}cbacdabcadbacbdcabcdacb},
\end{align*}
then swap $\eta'(w)[6]=\tt{d}$ and $\eta'(w)[16]=\tt{e}$. 

\item\label{fixb} If 
\begin{align*}
p_{39}(\eta(w))&=\tt{adcba\light{c}dbcabecbadbcdacbdcabcdbaecbdacdbc},\\
p_{39}(\eta(w))&=\tt{adcba\light{c}dbcabecbadbcdacbdcabcdbadcbdacdbc}, \mbox{ or}\\
p_{39}(\eta(w))&=\tt{adcba\light{c}dbcabecbadbcdabdcadbacbdabcdbadca},
\end{align*}
then change $\eta'(w)[6]$ from $\tt{c}$ to $\tt{e}$.

\item\label{fixc} If
\begin{align*}
p_{39}(\eta(w))&=\tt{adbca\light{b}dcbacebcadcbdabcdbacbdcaebcdabdcb},\\
p_{39}(\eta(w))&=\tt{adbca\light{b}dcbacebcadcbdabcdbacbdcadbcdabdcb}, \mbox{ or}\\
p_{39}(\eta(w))&=\tt{adbca\light{b}dcbacebcadcbdacdbadcabcdacbdcadba},
\end{align*}
then change $\eta''(w)[6]$ from $\tt{b}$ to $\tt{e}$.

\item \label{fixd} If
\begin{align*}
p_{39}(\eta(w))&=\tt{adba\light{ce}abcadcbacdbcabecbadbcdacbdcabcdba},
\end{align*}
then change $\eta'(w)[5:6]$ from $\tt{ce}$ to $\tt{ed}$.

\item \label{fixe} If
\begin{align*}
p_{39}(\eta(w))&=\tt{adca\light{be}acbadbcabdcbacebcadcbdabcdbacbdca},
\end{align*}
then change $\eta''(w)[5:6]$ from $\tt{be}$ to $\tt{ed}$.

\item\label{fixf} If 
\begin{align*}
s_{39}(\eta(w))&=\tt{cbdcadbcdabdcbacdbcadcbdabcebacbd\underline{c}abcda},\\
s_{39}(\eta(w))&=\tt{acdabdcbadbcabdacdbadcbdabcebacbd\underline{c}abcda},\\
s_{39}(\eta(w))&=\tt{acdabdcadbcdacbadcabdacdbcaecbacd\underline{a}bcadb}, \mbox{ or}\\
s_{39}(\eta(w))&=\tt{badbcdacbdcabcdbadcbdacdbcaecbacd\underline{a}bcadb},
\end{align*}
then change $\eta'(w)[-6]$ to $\tt{e}$.

\item\label{fixg} If 
\begin{align*}
s_{39}(\eta(w))&=\tt{bcdbadcbdacdbcabdcbadbcdacbecabcd\underline{b}acbda},\\
s_{39}(\eta(w))&=\tt{abdacdbcadcbacdabdcadbcdacbecabcd\underline{b}acbda},\\
s_{39}(\eta(w))&=\tt{cadcbdacdbadcabcdacbdcadbaceabcad\underline{c}bacdb}, \mbox{ or}\\
s_{39}(\eta(w))&=\tt{bcdbadcabdacbadbcdabdcadbaceabcad\underline{c}bacdb},
\end{align*}
then change $\eta''(w)[-6]$ to $\tt{e}$.

\item \label{fixh} If
\begin{align*}
s_{39}(\eta(w))&=\tt{bcadcbacdbcabdacbadcabc\underline{e}bacbdabca\underline{d}bacda},\\
s_{39}(\eta(w))&=\tt{cbadbcabdcbacdabcadbacb\underline{e}cabcdacba\underline{d}cabda},\\
s_{39}(\eta(w))&=\tt{acbdcabcdacbadbcabdcbac\underline{e}abcadbacb\underline{d}abcdb}, \mbox{ or}\\
s_{39}(\eta(w))&=\tt{cabdacbadcabcdbacbdabca\underline{e}cbacdbcab\underline{d}cbadb},\\
\end{align*}
then swap $\eta'(w)[-6]=\tt{d}$ and $\eta'(w)[-16]=\tt{e}$.

\item \label{fixi} If
\begin{align*}
s_{39}(\eta(w))&=\tt{acdbcabdcbadbcdacbecabcdbacbdabca\light{eb}acda},\\
s_{39}(\eta(w))&=\tt{abdcbacdbcadcbdabcebacbdcabcdacba\light{ec}abda},\\
s_{39}(\eta(w))&=\tt{bcdacbadcabdacdbcaecbacdabcadbacb\light{ea}bcdb}, \mbox{ or}\\
s_{39}(\eta(w))&=\tt{badcabcdacbdcadbaceabcadcbacdbcab\light{ec}badb},
\end{align*}
then change both $\eta'(w)[-6:-5]$ and $\eta''(w)[-6:-5]$ to $\tt{de}$.

\end{enumerate}

\noindent
We briefly explain why the proof of Theorem~\ref{Even} Case I (with some slight modifications) still applies to the newly defined $W_{2n}$.  The changes fall into two main types:

\begin{enumerate}[label={\upshape(\Alph*)}]
\item We swap a letter $\tt{d}$ and a letter $\tt{e}$ in $\eta'(w)$, effectively eliminating the $\tfrac{7}{5}$-power that resulted in the given appearance of $\tt{e}$ in $\eta(w)$ by changing the appearance of $\tt{d}$ in the length $4$ prefix to an $\tt{e}$ instead of the appearance of $\tt{d}$ in the length $4$ suffix (or vice versa).  This is what we do in cases~\ref{fixa} and~\ref{fixh}.

\item We change a letter beside a $\tt{d}$ to an $\tt{e}$.  This is done in such a way that we get an appearance of $\tt{ed}$ in $\sigma(p_{7}(\eta(w)))^R \tt{de}p_{7}(\eta(w))$, or an appearance of $\tt{de}$ in $s_{7}(\eta(w))\tt{ed}\sigma(s_{7}(\eta(w)))^R$, respectively.  This is what we do in cases~\ref{fixb},~\ref{fixc},~\ref{fixf}, and ~\ref{fixg}.  In cases~\ref{fixd}, and~\ref{fixe}, we need to change an $\tt{e}$ to a $\tt{d}$ first in order to make this idea work, which is why we change two consecutive letters in $\eta'(w)$ or $\eta''(w)$.  Finally, in case~\ref{fixi}, we need to make this change in both $\eta'(w)$ and $\eta''(w)$.

Here we need to modify the argument in the proof of Theorem~\ref{Even} Case I that $x$ cannot contain either of the factors $\tt{ed}$ or $\tt{de}$.  We consider only $\tt{ed}$; the argument for $\tt{de}$ is analogous.  Suppose that $xyx$ is a factor of $W_{2n}$ with $2|x|>|y|$, and that $x$ contains the factor $\tt{ed}$.  Observe that the original appearance of $\tt{ed}$ appears internally as $\tt{d}\beta\tt{ed}\sigma(\beta)\tt{e},$ where $\beta\in\{\tt{a},\tt{b}\}$.  On the other hand, the factors of length $2$ to the left and right of the new appearance of $\tt{ed}$ contain neither $\tt{d}$ nor $\tt{e}$.   Therefore, $x$ must be contained in $\beta\tt{ed}\sigma(\beta)$.  However, then we certainly have $|y|\geq 8$, contradicting the fact that $2|x|>|y|$.

In case~\ref{fixi}, we introduce two new appearances of the factor $\tt{de}$, but we rule out the possibility that both appearances of $x$ in $xyx$ contain one of the new appearances of $\tt{de}$ by a finite search.  Then the analogous argument to the one given above for $\tt{ed}$ applies once again.
\end{enumerate}

We check that Lemma~\ref{weproperties}\ref{FourThirds} still applies with $\eta'(w)$ and $\eta''(w)$ in place of $\eta(w)$.  While Lemma~\ref{weproperties}\ref{Frequency} does not necessarily hold for $\eta'(w)$ and $\eta''(w)$, the desired consequence that no factor of $\eta'(w)$ of length $15$ is a factor of $\sigma(\eta''(w))^R$ still holds (we only need to check the factors that contain a letter that has been changed here).  This is all that matters in the remainder of the proof of Theorem~\ref{Even} Case I.  So it suffices to check that
\[
\sigma(p_{39}(\eta''(w)))^R \tt{de}p_{39}(\eta'(w)) \ \ \ \ \mbox{(in cases \ref{fixa}-\ref{fixe})},
\]
and
\[
s_{39}(\eta'(w))\tt{ed}\sigma(s_{39}(\eta''(w)))^R \ \ \ \ \mbox{(in cases \ref{fixf}-\ref{fixi})}
\]
are $\tfrac{4}{3}^+$-free for every exceptional prefix and suffix.

\subsection{Exceptions for Case II(a)}\label{Appendix2}

There are $3$ possible prefixes of length $46$ of $\eta(w)$ for which we encounter a $\tfrac{4}{3}^+$-power in 
\[
\tau(p_{46}(\eta(w)))^R \tt{de}p_{46}(\eta(w)),
\]
and $60$ possible suffixes of length $39$ of $\eta(w)$ for which we encounter a $\tfrac{4}{3}^+$-power in 
\[
s_{39}(\eta(w))\tt{ed}\tau(s_{39}(\eta(w)))^R.
\]
Of these $60$ possibilities, there are $30$ that end in $\tt{a}$, and swapping $\tt{a}$ and $\tt{c}$ gives the other $30$.  We consider only the suffixes ending in $\tt{a}$, as those ending in $\tt{c}$ are handled analogously.

Again, we define $\eta'(w)$ and $\eta''(w)$, which are both obtained from $\eta(w)$ by making a minor adjustment to a short prefix and/or suffix of $\eta(w)$.  Begin by setting $\eta'(w)=\eta(w)$ and $\eta''(w)=\eta(w),$ and then make the changes described below.  Redefine
\[
W_{2n}=\tt{de} \ \eta'(w) \ \tt{ed} \ \tau(\eta''(w))^R,
\]
and we claim that $(W_{2n})$ is $\tfrac{4}{3}^+$-free.  The proof is similar to that of Case I and is omitted.

\begin{enumerate}
\item If
\begin{align*}
p_{46}(\eta(w))&=\tt{abdcb\light{ae}bcabdacbadcabceacbdcadbacdabcadcbdacdba}, \mbox{ or }\\
p_{46}(\eta(w))&=\tt{abdcb\light{ae}bcabdacbadcabceacbdcadbacdabcadcbeacdba},
\end{align*}
then change both $\eta'(w)[6:7]$ and $\eta''(w)[6:7]$ from $\tt{ae}$ to $\tt{ed}$.

\item If
\begin{align*}
p_{46}(\eta(w))&=\tt{ab\light{e}cbadbcda\light{b}dcadbaceabcadcbdacdbadcabceacbdcad},
\end{align*}
the change both $\eta'(w)[3]$ and $\eta''(w)[3]$ from $\tt{e}$ to $\tt{d}$, and both $\eta'(w)[12]$ and $\eta''(w)[12]$ from $\tt{b}$ to $\tt{e}$.

\item There are $21$ possibilities for $s_{39}(\eta(w))$ ending in one of the factors $\tt{dbacbda}$ or $\tt{dbcabda}.$ 
In each of these cases, change $\eta'(w)[-6]$ from $\tt{b}$ to $\tt{e}$.

\item If
\begin{align*}
s_{39}(\eta(w))&=\tt{cdbadcabdacbadbcdabdcadbacb\underline{ea}bcdbadcbda},\\
s_{39}(\eta(w))&=\tt{cdbadcabeacbadbcdabdcadbacb\underline{ea}bcdbadcbda}, \mbox{ or}\\
s_{39}(\eta(w))&=\tt{bcdacbadbcabdcbacdabcadbacb\underline{ea}bcdbadcbda},
\end{align*}
then change $\eta'(w)[-12:-11]$ from $\tt{ea}$ to $\tt{de}$.

\item If
\begin{align*}
s_{39}(\eta(w))&=\tt{dcabdacdbcaecbacdabcadbacb\underline{ea}bcdbadcabda},
\end{align*}
then change $\eta'(w)[-13:-12]$ from $\tt{ea}$ to $\tt{de}$.

\item If
\begin{align*}
s_{39}(\eta(w))&=\tt{acbadcabdacdbcaecbacdabcadbacb\underline{ea}bcdbadc},
\end{align*}
then change $\eta'(w)[-9:-8]$ from $\tt{ea}$ to $\tt{de}$.

\item If
\begin{align*}
s_{39}(\eta(w))&=\tt{bcadcbacdbcabdacbadcabc\underline{e}bacbdabca\underline{d}bacda},
\end{align*}
then swap $\eta'(w)[-6]$ and $\eta'(w)[-16]$, and swap $\eta''(w)[-6]$ and $\eta''(w)[-16]$.

\item If
\begin{align*}
s_{39}(\eta(w))&=\tt{dabcadbacbdcabcdacba\underline{e}bcabdcbac\underline{d}bcadcbda},
\end{align*}
then swap $\eta'(w)[-9]$ and $\eta'(w)[-19].$

\item If
\begin{align*}
s_{39}(\eta(w))&=\tt{cbadbcabdcbacdabcadbacb\underline{e}cabcdacba\underline{d}cabda},
\end{align*}
then swap $\eta'(w)[-6]$ and $\eta'(w)[-16].$

\item If
\begin{align*}
s_{39}(\eta(w))&=\tt{adcbdacdbadcabceacbdcadbcdabdcba\underline{eb}cabda},
\end{align*}
then change both $\eta'(w)[-7:-6]$ and $\eta''(w)[-7:-6]$ from $\tt{eb}$ to $\tt{de}$.

%
%
\end{enumerate}

\subsection{Exceptions for Case II(b1)}\label{Appendix3}

There are $5$ possible prefixes of length $39$ of $\eta(\dot{w})$ for which we encounter a $\tfrac{4}{3}^+$-power in 
\[
\rho(p_{39}(\eta(\pi(\ddot{w}))))^R \tt{de}p_{39}(\eta(\dot{w})),
\]
and $4$ possible suffixes of length $39$ of $\eta(\dot{w})$ for which we encounter a $\tfrac{4}{3}^+$-power in 
\[
s_{39}(\eta(\dot{w}))\tt{ed}\rho(s_{39}(\eta(\pi(\ddot{w}))))^R.
\]

We define $\eta'(\dot{w})$ and $\eta''(\pi(\ddot{w}))$, which are obtained from $\eta(\dot{w})$ and $\eta(\pi(\ddot{w}))$, respectively, by making a minor adjustment to a short prefix and/or suffix of $\eta(\dot{w})$ or $\eta(\pi(\ddot{w}))$, respectively.  Begin by setting $\eta'(\dot{w})=\eta(\dot{w})$ and $\eta''(\pi(\ddot{w}))=\eta(\pi(\ddot{w})),$ and then make the changes described below.  Redefine
\[
W_{2n}=\tt{de} \ \eta'(\dot{w}) \ \tt{ed} \ \rho(\eta''(\pi(\ddot{w})))^R,
\]
and we claim that $(W_{2n})$ is $\tfrac{4}{3}^+$-free.  The proof is similar to that of Case I and is omitted.

\begin{enumerate}

\item If
\begin{align*}
p_{39}(\eta(\dot{w}))&=\tt{bdcadbcdac\light{be}cabcdbacbdabcadcbacdbcabdac},\\
p_{39}(\eta(\dot{w}))&=\tt{bdcadbcdac\light{be}cabcdbadcbdacdbcabecbadbcda},\\
p_{39}(\eta(\dot{w}))&=\tt{bdcadbcdac\light{be}cabcdbacbdabcadcbacdbcabeac},\mbox{ or}\\
p_{39}(\eta(\dot{w}))&=\tt{bdcadbcdac\light{be}cabcdbadcbdacdbcabecbadbcda},
\end{align*}
then change $\eta'(\dot{w})[11:12]$ from $\tt{be}$ to $\tt{ed}$.

\item If
\begin{align*}
p_{39}(\eta(\dot{w}))&=\tt{bdcba\light{ce}bcadcbdacdbadcabeacbadbcdabdcadb},
\end{align*}
then
\begin{align*}
p_{39}(\eta(\pi(\ddot{w})))&=\tt{adcbdab\light{ce}bacbdcabcdacbaecabdacdbcadcbac},
\end{align*} 
and we change $\eta'(\dot{w})[6:7]$ from $\tt{ce}$ to $\tt{ed}$ and $\eta''(\pi(\ddot{w}))[8:9]$ from $\tt{ce}$ to $\tt{ed}$.

\item If
\begin{align*}
s_{39}(\eta(\dot{w}))&=\tt{cadbacbdcabcdacbadbcabdcbac\light{eb}cadcbdacdb},\\
s_{39}(\eta(\dot{w}))&=\tt{adcbdabcdbacbdcadbcdabdcbac\light{eb}cadcbdacdb},\mbox{ or}\\
s_{39}(\eta(\dot{w}))&=\tt{adcbdabcebacbdcadbcdabdcbac\light{eb}cadcbdacdb},
\end{align*}
then change $\eta'(w)[-12:-11]$ from $\tt{eb}$ to $\tt{de}$.

\item If
\begin{align*}
s_{39}(\eta(\dot{w}))&=\tt{bdacdbadcbdabcaebacdabdcadbcdacb\light{ec}abcdb},
\end{align*}
then
\begin{align*}
s_{39}(\eta(\pi(\ddot{w})))&=\tt{cabcdacbdcadbaceabcadcbacdbcab\light{ec}badbcda},
\end{align*} 
and we change $\eta'(\dot{w})[-7:-6]$ from $\tt{ec}$ to $\tt{de}$ and $\eta''(\pi(\ddot{w}))[-9:-8]$ from $\tt{ec}$ to $\tt{de}$.

\end{enumerate}

\end{document}